
\documentclass[]{interact}

\usepackage{amsmath,amssymb,enumerate,subfigure,multirow,hhline,color}
\usepackage{xcolor}
\usepackage{hyperref}
\usepackage{graphicx}
\usepackage{graphics}
\usepackage[sort]{cite}
\usepackage{amsthm}
\usepackage{algorithm}
\usepackage{algpseudocode}

\usepackage{tcolorbox}
\usepackage{epsfig,psfrag}
\usepackage{tabularx}
\usepackage{tabulary}

\usepackage[sort]{cite}

\usepackage{hyperref}
\hypersetup{breaklinks=true,colorlinks=true,linkcolor=blue,citecolor=blue}

\usepackage[noabbrev,capitalise,nameinlink]{cleveref}

\DeclareMathOperator{\argmin}{arg\,min}

\newtheorem{theorem}{Theorem}
\newtheorem{example}{Example}
\newtheorem{assumption}{Assumption}

\newtheorem{problem}{Problem}
\newtheorem{lemma}{Lemma}
\newtheorem{definition}{Definition}
\newtheorem{proposition}{Proposition}

\allowdisplaybreaks

\begin{document}


\title{On the Semidefinite Duality of Finite-Horizon LQG Problem}

\author{
\name{Donghwan Lee \thanks{Email: donghwan@kaist.ac.kr}}
\affil{Department of Electrical Engineering,
KAIST, Daejeon, 34141, South Korea}
}

\maketitle

\begin{abstract}
In this paper, our goal is to study fundamental foundations of linear quadratic Gaussian (LQG) control problems for stochastic linear time-invariant systems via Lagrangian duality of semidefinite programming (SDP) problems. In particular, we derive an SDP formulation of the finite-horizon LQG problem, and its Lagrangian duality. Moreover, we prove that Riccati equation for LQG can be derived the KKT optimality condition of the corresponding SDP problem. Besides, the proposed primal problem efficiently decouples the system matrices and the gain matrix. This allows us to develop new convex relaxations of non-convex structured control design problems such as the decentralized control problem. We expect that this work would provide new insights on the LQG problem and may potentially facilitate developments of new formulations of various optimal control problems. Numerical examples are given to demonstrate the effectiveness of the proposed methods.
\end{abstract}

\begin{keywords}
Linear quadratic Gaussian (LQG); optimal control; linear matrix inequality (LMI); Lagrangian duality; semidefinite programming (SDP)
\end{keywords}

\section{Introduction}

Duality has long been a core concept in optimal control theory such as the Pontryagin's maximal principle. On the other hand, emergence of convex optimization~\cite{Boyd2004} and semidefinite programming (SDP) techniques in control analysis and design promoted new optimization formulations of control problems~\cite{Boyd1994,de1999new,el2000advances,geromel2007h,de2002extended,li2002robust,xu2008survey,scherer2000linear,palhares2000mixed,han2005new,lee2010less} during the last decades. Accordingly, the corresponding dual problems have been studied to further deepen our understanding of the classical control theories, e.g.,~\cite{Boyd1994,de2002extended,yao2001stochastic,rami2000linear}. For instance, a new proof of Lyapunov's matrix inequality was presented in~\cite{henrion2001rank} based on
the standard SDP duality~\cite{vandenberghe1996semidefinite}. In addition, SDP formulations
of the LQR problem and their dual formulations were developed in~\cite{yao2001stochastic} and~\cite{rami2000linear}. Comprehensive studies on the SDP dualities in systems and control theory, such
as the Kalman-Yakubovich-Popov (KYP) lemma, the LQR problem, and
the $H_\infty$-norm computation, were provided in~\cite{balakrishnan2003semidefinite}. A new Lagrangian duality result and its relation to reinforcement learning problems were established in~\cite{lee2018primal} for infinite-horizon LQR problems. More recent results include the state-feedback solution to the LQR problem~\cite{gattami2010generalized}, the generalized KYP lemma and $H_\infty$ analysis~\cite{you2013lagrangian,you2015primal} derived using the Lagrangian duality, a sufficient condition for the strong duality of non-convex SDP problems~\cite{lee2021lossless}.

Several relations between the LQG problems and SDP problems have been studied in the literature, see for example,~\cite{Boyd1994,de2002extended,yao2001stochastic,rami2000linear}. The recent paper,~\cite{gattami2010generalized}, proposed a new SDP formulation, where the finite-horizon LQR problem was converted into the optimal covariance matrix selection problem, and it can be also interpreted as a dual problem of the standard LQR approaches based on the Riccati equations or the Lyapunov methods.

In this paper, we consider the finite-horizon linear quadratic Gaussian (LQG) control problem~\cite{bertsekas1996neuro}. The goal is to investigate a new semidefinite programming (SDP) formulation of the finite-horizon LQG problem and its dual counterpart by using the Lagrangian duality in standard convex optimization~\cite{Boyd2004}. In particular, we prove that Riccati equation for LQG can be derived the KKT optimality condition of the corresponding SDP problem. Moreover, the proposed primal problem efficiently decouples the system matrices and the gain matrix. This fact allows us to develop new convex relaxations of non-convex structured control design problems such as the decentralized control problem.
We expect that the results in this paper provide new insights on the LQG problem based on a relation between our primal and dual formulations and the Riccati equation, which can potentially facilitate developments new algorithms and new formulations of various optimal control problems, such as the data-drive control design algorithm~\cite{lee2018primal}.

{\bf Notation}: The adopted notation is as follows: ${\mathbb N}$
and ${\mathbb N}_+$: sets of nonnegative and positive integers,
respectively; ${\mathbb R}$: set of real numbers; ${\mathbb R}_+$:
set of nonnegative real numbers; ${\mathbb R}_{++}$: set of
positive real numbers; ${\mathbb R}^n $: $n$-dimensional Euclidean
space; ${\mathbb R}^{n \times m}$: set of all $n \times m$ real
matrices; $A^T$: transpose of matrix $A$; $A \succ 0$ ($A \prec
0$, $A\succeq 0$, and $A\preceq 0$, respectively): symmetric
positive definite (negative definite, positive semi-definite, and
negative semi-definite, respectively) matrix $A$; $I_n $: $n
\times n$ identity matrix; ${\mathbb S} ^n $: symmetric $n \times
n$ matrices; ${\mathbb S}_+^n $: cone of symmetric $n \times n$
positive semi-definite matrices; ${\mathbb S}_{++}^n$:
symmetric $n\times n$ positive definite matrices; $Tr(A)$:
trace of matrix $A$; ${\cal N}(v,W)$: normal distribution with mean $v$ and variance $W$; $*$ inside a matrix: transpose of its symmetric term; s.t.: subject to.

\section{Preliminaries}
In this section, we briefly summarize basic concepts of the standard Lagrangian duality theory in~\cite{Boyd2004}. Let us consider the optimization problem with a matrix inequality (semidefinite programming, SDP), which is our main concern in this paper.
\begin{problem}[Primal problem]\label{prob:1}
Solve for $x \in {\mathbb R}^n$
\begin{align*}
p^*:=&\inf_{x \in \cal D} f(x)\quad {\rm{s.t.}}\quad \Phi(x) \preceq 0
\end{align*}
where $x\in {\mathbb R}^n$, $\Phi: {\mathbb R}^n \to {\mathbb S}^{\hat n}$ is a differentiable matrix function, $f: {\mathbb R}^n \to {\mathbb R}$ is a differentiable objective function, and  ${\cal D} \subseteq {\mathbb R}^n$ is some convex set.
\end{problem}
Note that in~\cref{prob:1}, we use $\inf$ instead of $\max$ because $\cal D$ can be potentially an open set.
An important property of problems of the form in~\cref{prob:1} that arises frequently is the convexity.
\begin{definition}[Convexity]
\cref{prob:1} is said to be convex if $f$ is a convex function, $\cal D$ is a convex set, and the feasible set, $\{ x \in {\mathbb R}^n :\Phi(x) \preceq 0\}$, is convex.
\end{definition}
Note that for the feasible set, $\{ x \in {\mathbb R}^n :\Phi(x) \preceq 0\}$, to be convex, $\Phi(x)$ needs to be linear or convex in $x$. Associated with~\cref{prob:1}, the Lagrangian function~\cite{Boyd2004} is defined as
\[
L(x,\Lambda) := f (x) + Tr(\Lambda \Phi(x))
\]
for any $\Lambda \in {\mathbb S}^{\hat n}_+$, called the Lagrangian multiplier. For any $\Lambda \in {\mathbb S}^{\hat n}_+$, we define the dual function as
\[
g(\Lambda) := \inf_{x \in \cal D} L(x,\Lambda ) = \inf _{x \in \cal D} (f (x) + Tr(\Lambda \Phi(x)))
\]
It is known that the dual function yields lower bounds on the optimal value $p^*$:
\begin{align}
g(\Lambda ) \le p^*\label{eq:13}
\end{align}
for any Lagrange multiplier, $\Lambda \in {\mathbb S}^{\hat n}_+$. The Lagrange dual problem associated with~\cref{prob:1} is defined as follows.
\begin{problem}[Dual problem]\label{prob:2}
Solve for $\Lambda \in {\mathbb S}^{\hat n}_+$
\begin{align*}
&d^*:=\sup_{\Lambda \in {\mathbb S}^{\hat n}_+} g (\Lambda).
\end{align*}
\end{problem}
The dual problem is known to be convex even if the primal is not. In this context, the original~\cref{prob:1} is sometimes called the primal problem. Similarly, $d^*$ is called the dual optimal value, while $p^*$ is called the primal optimal value. The inequality~\eqref{eq:13} implies the important inequality
\[
d^*  \le p^*,
\]
which holds even if the original problem is not convex. This property is called weak duality, and the difference, $p^* - d^*$ is called the optimal duality gap. If the equality $d^* = p^*$ holds, i.e., the optimal duality gap is zero, then we say that strong duality holds.
\begin{definition}[Strong duality]
If the equality, $d^* = p^*$, holds, then we say that strong duality holds for~\cref{prob:1}.
\end{definition}
There are many results that establish conditions on the problem under
which strong duality holds. These conditions are called constraint qualifications. Once such constraint qualification is Slater's condition, which is stated below.
\begin{lemma}[Slater's condition]
Suppose that~\cref{prob:1} is convex. If there exists an $x\in  {\bf relint}({\cal D})$ such that
\[
\Phi (x) \prec 0,
\]
then the strong duality holds, where ${\bf relint}({\cal D})$ is the relative interior~\cite[pp.~37]{Boyd2004}.
\end{lemma}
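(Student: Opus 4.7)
The plan is to adapt the standard separating hyperplane proof of Slater's condition (as in Boyd-Vandenberghe, Section 5.3) to the semidefinite setting. The goal is to produce a Lagrange multiplier $\Lambda \in \mathbb{S}^{\hat n}_+$ certifying $g(\Lambda) \geq p^*$; combined with weak duality $g(\Lambda) \leq p^*$ from \eqref{eq:13}, this will give $d^* = p^*$.

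First, I would introduce the auxiliary sets
\[
\mathcal{A} := \{(U,t) \in \mathbb{S}^{\hat n} \times \mathbb{R} : \exists\, x \in \mathcal{D} \text{ with } \Phi(x) \preceq U,\ f(x) \leq t\}, \qquad \mathcal{B} := \{(U,t) : U \preceq 0,\ t < p^*\},
\]
and verify that both are convex (using convexity of $f$, $\Phi$, and $\mathcal{D}$ for $\mathcal{A}$) and that $\mathcal{A} \cap \mathcal{B} = \emptyset$ by the very definition of $p^*$. The separating hyperplane theorem then supplies a nonzero pair $(\tilde\Lambda, \mu) \in \mathbb{S}^{\hat n} \times \mathbb{R}$ and a scalar $\gamma$ such that $Tr(\tilde\Lambda U) + \mu t \geq \gamma$ for all $(U,t) \in \mathcal{A}$, while $Tr(\tilde\Lambda U) + \mu t \leq \gamma$ for all $(U,t) \in \mathcal{B}$.

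The second inequality encodes the sign information on the multiplier. Letting $t \to -\infty$ forces $\mu \geq 0$, and letting $U$ range over $-Z$ for arbitrary $Z \succeq 0$ forces $\tilde\Lambda \succeq 0$; taking $U = 0$ and $t \to p^*$ gives $\mu p^* \leq \gamma$. Substituting $U = \Phi(x)$, $t = f(x)$ for any $x \in \mathcal{D}$ into the first inequality then yields
\[
\mu f(x) + Tr(\tilde\Lambda \Phi(x)) \geq \mu p^* \quad \text{for all } x \in \mathcal{D}.
\]
Finally, Slater's assumption rules out $\mu = 0$: if $\mu = 0$, then $Tr(\tilde\Lambda \Phi(x_0)) \geq 0$ for the Slater point $x_0$, which together with $\tilde\Lambda \succeq 0$ and $\Phi(x_0) \prec 0$ forces $\tilde\Lambda = 0$, contradicting $(\tilde\Lambda, \mu) \neq 0$. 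Hence $\mu > 0$, and setting $\Lambda := \tilde\Lambda/\mu \in \mathbb{S}^{\hat n}_+$ gives $f(x) + Tr(\Lambda \Phi(x)) \geq p^*$ on $\mathcal{D}$, hence $g(\Lambda) \geq p^*$.

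The main obstacle I anticipate is the case where $\mathcal{D}$ is not full-dimensional in $\mathbb{R}^n$: the separation must then be carried out inside the affine hull of $\mathcal{D}$ in order for the Slater point's strict inequality $\Phi(x_0) \prec 0$ to usefully contradict $\mu = 0$, which is precisely why the hypothesis uses $\mathbf{relint}(\mathcal{D})$ rather than $\mathrm{int}(\mathcal{D})$. A secondary subtlety is ensuring that the semidefinite component $\tilde\Lambda$ of the separator lies in $\mathbb{S}^{\hat n}_+$; this follows from the upward closure of $\mathcal{A}$ under the PSD cone (if $(U,t) \in \mathcal{A}$ and $V \succeq 0$, then $(U+V, t) \in \mathcal{A}$ since $\Phi(x) \preceq U$ implies $\Phi(x) \preceq U+V$), exactly as used above.
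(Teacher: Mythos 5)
The paper states this lemma without proof, deferring to \cite{Boyd2004}, and your argument is exactly the standard separating-hyperplane proof from that reference (Section~5.3.2), correctly transplanted from scalar inequality constraints to the semidefinite cone: the sets $\mathcal{A}$ and $\mathcal{B}$, the sign analysis of the separator via the self-duality of $\mathbb{S}^{\hat n}_+$, and the use of the Slater point to exclude $\mu = 0$ all go through as you describe. Your closing remarks on the relative interior and on the upward closure of $\mathcal{A}$ under the PSD cone address the only genuine subtleties, so the proof is correct and follows the same route the paper implicitly relies on.
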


Without the constrain qualifications, such as the Slater's condition, the strong duality does not hold.
For more comprehensive discussions on the duality, the reader is referred to the monograph~\cite{Boyd2004}. Before closing this section, we introduce several transformations of matrix inequalities, which will play important roles in this paper, and are thus summarized in this section. One of the most popular transformations of matrix inequalities is the so-called Schur complement, which frequently arises in LMI-based computational control designs.
\begin{lemma}[Schur complement~{\cite{Boyd1994}}]\label{lemma:Schur-complement}
The matrix inequality
\[
A^T H A - P \prec 0,\quad H \succ 0
\]
holds if and only if
\[
\left[ {\begin{array}{*{20}c}
   { - P} & * \\
   {A} & {-H^{-1}}  \\
\end{array}} \right] \prec 0
\]
\end{lemma}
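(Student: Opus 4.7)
The plan is to reduce the block $2\times 2$ inequality to a block diagonal one via a single congruence transformation. Since a congruence $M \mapsto T^T M T$ by an invertible $T$ preserves the sign of definiteness, once the block matrix is put in block diagonal form the equivalence becomes transparent.

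Concretely, I would introduce the upper-triangular block matrix
\[
T := \begin{bmatrix} I & 0 \\ HA & I \end{bmatrix},
\]
which is always invertible, and verify by a short block-multiplication that
\[
T^T \begin{bmatrix} -P & A^T \\ A & -H^{-1} \end{bmatrix} T
= \begin{bmatrix} A^T H A - P & 0 \\ 0 & -H^{-1} \end{bmatrix}.
\]
The off-diagonal block $A - H^{-1}(HA) = 0$ cancels precisely because $T$ is chosen with $HA$ in its lower-left corner. Since $T$ is invertible, the original block matrix is negative definite if and only if both diagonal blocks on the right are negative definite, i.e., $A^T H A - P \prec 0$ and $H^{-1} \succ 0$ (equivalently $H \succ 0$).

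From this equivalence both directions of the lemma fall out. For the ``only if'' direction, the hypotheses $A^T H A - P \prec 0$ and $H \succ 0$ give exactly the two diagonal block conditions, so the block matrix is negative definite. For the ``if'' direction, I would first recover $H \succ 0$ by restricting the quadratic form defined by the block matrix to vectors of the form $(0, y)^T$, obtaining $-y^T H^{-1} y < 0$ for every nonzero $y$, which yields $H^{-1} \succ 0$ and hence $H \succ 0$. Once $H \succ 0$ is in hand, the same congruence identity delivers $A^T H A - P \prec 0$.

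The only real ``obstacle'' is a bookkeeping one: choosing $T$ so that the cross blocks cancel and remembering that $H$ appears symmetrically (so $T^T$ contains $A^T H$ rather than $A^T H^T$). With the $T$ above this is automatic, and the remainder of the argument is purely algebraic; no topological or compactness arguments are required.
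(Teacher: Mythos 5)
Your proof is correct: the congruence by $T=\begin{bmatrix} I & 0\\ HA & I\end{bmatrix}$ does block-diagonalize the matrix to $\mathrm{diag}(A^THA-P,\;-H^{-1})$ (the block multiplication checks out, with the cross terms $A-H^{-1}HA$ and $A^T-A^THH^{-1}$ vanishing as you say), and invertibility of $T$ makes the equivalence immediate. The paper itself gives no proof of this lemma --- it is quoted from the literature --- so there is nothing to compare against; your argument is the standard congruence derivation and would serve as a complete proof. One small remark: since $H^{-1}$ appears in the block matrix, $H$ is already symmetric and invertible, so the congruence identity holds unconditionally and the diagonal block $-H^{-1}\prec 0$ already delivers $H\succ 0$; the preliminary restriction to vectors $(0,y)^T$ is harmless but not needed.
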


In~\cite{de1999new}, an extended Schur complement was introduced to deal with robust control design problems. For convenience, it is outlined below.
\begin{lemma}[Extended Schur complement~I,~{\cite{de1999new}}]\label{lemma:extended-Schur-complement-I}
The following conditions are equivalent:
\begin{enumerate}
\item For a symmetric matrix $P\succ 0$, the matrix inequality
\[
A^T P A - P \prec 0
\]
holds.

\item For a symmetric matrix $P\succ 0$, there exist a matrix $G$ such that
\[
\left[ {\begin{array}{*{20}c}
   { - P} & *  \\
   {GA} & {P - G - G^T }  \\
\end{array}} \right] \prec 0
\]
\end{enumerate}
\end{lemma}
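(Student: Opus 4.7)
The plan is to prove the two implications separately, using a congruence transformation in one direction and a direct choice of the slack variable $G$ in the other.

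For the implication (2) $\Rightarrow$ (1), I would pre- and post-multiply the block matrix by the rectangular matrix $[I\;\; A^T]$ and its transpose $[I\;\;A^T]^T$. A short computation gives
\begin{align*}
\begin{bmatrix} I & A^T \end{bmatrix}
\begin{bmatrix} -P & A^T G^T \\ GA & P-G-G^T \end{bmatrix}
\begin{bmatrix} I \\ A \end{bmatrix}
= -P + A^T G^T A + A^T G A + A^T(P-G-G^T)A = A^T P A - P,
\end{align*}
where the cross terms involving $G$ and $G^T$ cancel exactly. Since $[I\;\;A^T]^T$ has trivial kernel (the first block is the identity), this congruence preserves strict negative definiteness, so $A^T P A - P \prec 0$ follows immediately from the assumed LMI.

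For the implication (1) $\Rightarrow$ (2), I would simply take the canonical choice $G = P$. The LMI then reduces to
\[
\begin{bmatrix} -P & A^T P \\ PA & -P \end{bmatrix} \prec 0.
\]
Because $P \succ 0$, the $(2,2)$ block $-P$ is negative definite, and applying the Schur complement (\cref{lemma:Schur-complement}) to this block yields the equivalent condition $-P - (A^T P)(-P)^{-1}(PA) = A^T P A - P \prec 0$, which is exactly hypothesis~(1). Hence $G=P$ is a valid witness and the LMI holds.

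There is no genuine obstacle here; both directions are essentially algebraic. The only subtlety worth flagging is that in the direction (2) $\Rightarrow$ (1) one is multiplying a strict LMI by a non-square matrix, so one has to observe that $[I\;\;A^T]^T$ has full column rank to conclude that the resulting quadratic form is still strictly negative definite rather than merely negative semidefinite. Also, it is worth recording that the proof does not require $G$ to be symmetric or invertible a priori, though invertibility of $G$ is implied a posteriori by the fact that the $(2,2)$-block negativity forces $G + G^T \succ P \succ 0$.
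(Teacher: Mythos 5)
Your proof is correct, and it is essentially the specialization to $H=P$ of the argument the paper gives for the more general \cref{lemma:extended-Schur-complement-II} (the paper itself only cites \cite{de1999new} for this lemma and recovers it from the generalization): the same congruence with $\left[\begin{smallmatrix} I \\ A \end{smallmatrix}\right]$ for (2)$\Rightarrow$(1), and the same witness $G=P$ (the paper's $G=H$) combined with \cref{lemma:Schur-complement} for (1)$\Rightarrow$(2). Your remarks on the full column rank of the congruence matrix and the a posteriori invertibility of $G$ are accurate and slightly more careful than the paper's exposition.
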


Application of~\cref{lemma:extended-Schur-complement-I} is restricted in the sense that $P$ appears twice on the left-hand side of the inequalities. In this paper, we will use a generalized version of~\cref{lemma:extended-Schur-complement-I}, which eliminates this restriction.
Since the proof is not presented in the literature, it is briefly presented here for completeness.
\begin{lemma}[Extended Schur complement~II]\label{lemma:extended-Schur-complement-II}
The following conditions are equivalent:
\begin{enumerate}
\item For symmetric matrices $H\succ 0$ and $P\succ 0$, the matrix inequality
\[
A^T H A - P \prec 0
\]
holds.

\item There exists a matrix $G$ such that
\[
\left[ {\begin{array}{*{20}c}
   { - P} & *\\
   {GA} & {H - G - G^T }  \\
\end{array}} \right] \prec 0
\]
\end{enumerate}
\end{lemma}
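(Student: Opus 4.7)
The proof is an equivalence, so I would establish the two implications separately.

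For the direction $(1)\Rightarrow(2)$, the plan is to exhibit an explicit choice of the free matrix $G$. The natural candidate is $G=H$, which turns the $(2,2)$ block $H-G-G^T$ into $-H$ and makes the block matrix equal to
\[
\begin{bmatrix} -P & A^T H \\ HA & -H \end{bmatrix}.
\]
Since $H\succ 0$, we have $-H\prec 0$, so \cref{lemma:Schur-complement} (the ordinary Schur complement, applied with the roles of $\prec$ and $\succ$ dualized in the obvious way) tells us this block matrix is negative definite iff its Schur complement with respect to $-H$ is negative definite, i.e.\ iff $-P - A^T H(-H)^{-1}HA = -P + A^T H A \prec 0$, which is exactly (1). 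Hence $G=H$ works.

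For the direction $(2)\Rightarrow(1)$, the plan is to eliminate $G$ from the LMI by a congruence transformation that projects $G$ out. Specifically, I would pre-multiply the block inequality by $\begin{bmatrix} I & A^T \end{bmatrix}$ and post-multiply by $\begin{bmatrix} I \\ A \end{bmatrix}$. Since $\begin{bmatrix} I \\ A \end{bmatrix}$ has full column rank, the resulting scalar (matrix) inequality is strict whenever the original block inequality is. A short expansion gives
\[
\begin{bmatrix} I & A^T \end{bmatrix}
\begin{bmatrix} -P & A^T G^T \\ GA & H-G-G^T \end{bmatrix}
\begin{bmatrix} I \\ A \end{bmatrix}
= -P + A^T G A + A^T G^T A + A^T(H-G-G^T)A
= A^T H A - P,
\]
so the $G$-terms cancel exactly and we obtain $A^T H A - P \prec 0$, which is (1). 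Positive definiteness of $P$ also follows immediately from the $(1,1)$ block $-P\prec 0$.

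There is no serious obstacle in either direction; the only nontrivial step is recognizing the right ``slack'' choice $G=H$ in one direction and the right annihilating congruence $\bigl[\begin{smallmatrix}I\\A\end{smallmatrix}\bigr]$ in the other. The structure is parallel to the proof of \cref{lemma:extended-Schur-complement-I}, but with the key observation that the congruence transform cancels every occurrence of $G$ without requiring $P$ to appear in the $(2,2)$ block, which is what removes the restriction mentioned after \cref{lemma:extended-Schur-complement-I}.
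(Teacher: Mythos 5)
Your proposal is correct and follows essentially the same route as the paper: the choice $G=H$ combined with \cref{lemma:Schur-complement} for $(1)\Rightarrow(2)$, and the congruence with $\bigl[\begin{smallmatrix}I\\A\end{smallmatrix}\bigr]$ that cancels all occurrences of $G$ for $(2)\Rightarrow(1)$. The only cosmetic difference is that the paper organizes the cancellation by first rewriting the block matrix as a block-diagonal part plus $\bigl[\begin{smallmatrix}0\\G\end{smallmatrix}\bigr]\bigl[\begin{smallmatrix}A&-I\end{smallmatrix}\bigr]$ and its transpose, whereas you expand the product directly; the computations are identical.
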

\begin{proof}
Suppose 1) holds. Then, using the Schur complement~\cref{lemma:Schur-complement}, we have
\[
\left[ {\begin{array}{*{20}c}
   { - P} & * \\
   {A} & {-H^{-1}}  \\
\end{array}} \right] \prec 0
\]
Multiplying the last inequality with $\left[ {\begin{array}{*{20}c}
   I & 0  \\
   0 & H  \\
\end{array}} \right]$ from the left and right, one gets
\[
\left[ {\begin{array}{*{20}c}
   { - P} & * \\
   {HA} & {-H}  \\
\end{array}} \right] = \left[ {\begin{array}{*{20}c}
   { - P} & * \\
   {HA} & {H-H-H}  \\
\end{array}} \right] \prec 0
\]
Letting $G = H$, we conclude that 2) holds. Conversely, assume that 2) holds. Then,
\begin{align*}
&\left[ {\begin{array}{*{20}c}
   I  \\
   A  \\
\end{array}} \right]^T \left[ {\begin{array}{*{20}c}
   { - P} & * \\
   {GA} & {H - G - G^T }  \\
\end{array}} \right]\left[ {\begin{array}{*{20}c}
   I  \\
   A  \\
\end{array}} \right]\\
=&\left[ {\begin{array}{*{20}c}
   I  \\
   A  \\
\end{array}} \right]^T \left\{ {\left[ {\begin{array}{*{20}c}
   { - P} & 0  \\
   0 & H  \\
\end{array}} \right] + }  {\left[ {\begin{array}{*{20}c}
   0  \\
   G  \\
\end{array}} \right]\left[ {\begin{array}{*{20}c}
   A & { - I}  \\
\end{array}} \right] + \left[ {\begin{array}{*{20}c}
   A & { - I}  \\
\end{array}} \right]^T \left[ {\begin{array}{*{20}c}
   0  \\
   G  \\
\end{array}} \right]^T } \right\}\left[ {\begin{array}{*{20}c}
   I  \\
   A  \\
\end{array}} \right]\\
=&  - P + A^T HA\\
\prec & 0.
\end{align*}

Therefore, 1) holds. This completes the proof.
\end{proof}

\cref{lemma:extended-Schur-complement-II} is more general than~\cref{lemma:extended-Schur-complement-I} in the sense that if $H = P$,~\cref{lemma:extended-Schur-complement-I} is recovered from~\cref{lemma:extended-Schur-complement-II}.
Lastly, the matrix inequality transformations presented here have a common feature: they present two matrix inequalities which are equivalent in some sense. The concept of equivalent relations will be more rigorously formalized in the next section.

\section{Equivalence and strong duality}\label{sec:strong-duality}
In this section, we present the concept of the equivalent transformation and its relation with the strong duality proposed in~\cite{lee2021lossless}. Consider the following transformed optimization of the original problem,~\cref{prob:1}.
\begin{problem}[Transformation~I]\label{prob:3}
Solve for $(x,z) \in {\mathbb R}^n \times {\mathbb R}^m$
\begin{align*}
\hat p^*:=& \inf_{(x,z) \in {\cal D} \times {\mathbb R}^m} f(x)\quad {\rm{s.t.}}\quad \Theta(x,z) \preceq 0
\end{align*}
where $x \in {\mathbb R}^n$, $\Theta: {\mathbb R}^n \times {\mathbb R}^m \to {\mathbb S}^{\hat n + \hat m}$ is a differentiable matrix function, and is a transformation of $\Phi(x)\in {\mathbb S}^{\hat n} $, and $z \in {\mathbb R}^m$ is an additional variable introduced through the transformation.
\end{problem}

\cref{prob:3} and~\cref{prob:1} can be related via some property called the equivalence, which is defined below.
\begin{definition}[Equivalence]\label{def:equivalence}
For any $t\in {\mathbb R}$ and $U\in {\mathbb S}^{\hat n}$, define the two sets
\begin{align*}
{\cal F}(U,t): =& \{ x \in {\cal D},\Phi (x) \prec U,f(x) \le t\}\\
{\cal G}(U,t): =& \left\{ {(x,z) \in {\cal D} \times {\mathbb R}^n ,\Theta (x,z) \prec \left[ {\begin{array}{*{20}c}
   U & 0  \\
   0 & 0  \\
\end{array}} \right],f(x) \le t} \right\}.
\end{align*}

The two problems,~\cref{prob:1} and~\cref{prob:3}, are said to be equivalent if the following two statements are true:
\begin{enumerate}
\item for any $t\in {\mathbb R}$ and $U\in {\mathbb S}^{\hat n}$ such that ${\cal F}(U,t) \neq\emptyset$ and $x \in {\cal F}(U,t)$, there exist $z \in {\mathbb R}^n$ such that $(x,z) \in {\cal G}(U,t)$

\item for any $t\in {\mathbb R}$ and $U\in {\mathbb S}^{\hat n}$ such that ${\cal G}(U,t) \neq\emptyset$ and $(x,z) \in {\cal F}(G,t)$, $x\in {\cal G}(U,t)$ holds.
\end{enumerate}
\end{definition}

An implication of the strong equivalence in~\cref{def:equivalence} is that an optimal solution of one problem can be recovered from an optimal solution of the other problem and vice versa. This concept is formalized below.
\begin{lemma}
Suppose that the Slater's condition holds for~\cref{prob:1}. Moreover, suppose that~\cref{prob:1} and~\cref{prob:3} are strongly equivalent. Then, $p^* = \hat p^*$ holds. Moreover, let $x^*$ be an optimal solution of~\cref{prob:1}. Then, there exists some $\hat z^*$ such that $(x^*,\hat z^*)$ is an optimal solution of~\cref{prob:3}. Conversely, if $(\hat x^*,\hat z^*)$ is an optimal solution of~\cref{prob:3}, then $\hat x^*$ is an optimal solution of~\cref{prob:1}.
\end{lemma}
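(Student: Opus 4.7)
The plan is to exploit Slater's condition to bridge the gap between the strict inequalities used in \cref{def:equivalence} (via the sets $\cal F$ and $\cal G$) and the non-strict constraints defining the feasible sets of \cref{prob:1} and \cref{prob:3}. The equality $p^* = \hat p^*$ will follow from two inequalities, each proved by a perturbation/limit argument that converts a (non-strictly) feasible point of one problem into a strictly feasible point, applies the appropriate direction of the equivalence, and then lets the perturbation vanish.

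For $\hat p^* \le p^*$, I would take an optimal $x^*$ of \cref{prob:1} and a Slater point $x_0 \in \mathbf{relint}(\cal D)$ with $\Phi(x_0) \prec 0$. By convexity, $x_\lambda := (1-\lambda) x^* + \lambda x_0$ lies in $\cal D$ and, using convexity of $\Phi$, satisfies $\Phi(x_\lambda) \prec 0$ for each $\lambda \in (0,1]$. Hence $x_\lambda \in {\cal F}(0, f(x_\lambda))$, and statement~1 of \cref{def:equivalence} supplies a $z_\lambda$ with $(x_\lambda, z_\lambda) \in {\cal G}(0, f(x_\lambda))$. In particular $\Theta(x_\lambda, z_\lambda) \prec 0$, so $(x_\lambda, z_\lambda)$ is feasible for \cref{prob:3} and $\hat p^* \le f(x_\lambda) \to f(x^*) = p^*$ as $\lambda \to 0^+$.

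For the reverse inequality, I would start from an arbitrary feasible pair $(\hat x, \hat z)$ of \cref{prob:3}. Applying statement~1 of \cref{def:equivalence} at the Slater point $x_0$ first yields a companion $z_0$ with $\Theta(x_0, z_0) \prec 0$. Then $(\hat x_\lambda, \hat z_\lambda) := (1-\lambda)(\hat x, \hat z) + \lambda (x_0, z_0)$ satisfies $\Theta(\hat x_\lambda, \hat z_\lambda) \prec 0$ by convexity of the feasible region of \cref{prob:3}, so statement~2 of \cref{def:equivalence} gives $\Phi(\hat x_\lambda) \prec 0$. Hence $p^* \le f(\hat x_\lambda)$, and letting $\lambda \to 0^+$ and then taking the infimum over feasible $(\hat x, \hat z)$ yields $p^* \le \hat p^*$. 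Combining the two inequalities gives $p^* = \hat p^*$.

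For the optimal-solution correspondence, given an optimal $x^*$ of \cref{prob:1}, the sequence $\{(x_\lambda, z_\lambda)\}$ from the first step is a minimizing sequence for \cref{prob:3} with $x_\lambda \to x^*$; extracting a limit $\hat z^*$ of $z_\lambda$ along a subsequence and invoking closedness of $\{\Theta \preceq 0\}$ produces an optimal $(x^*, \hat z^*)$ of \cref{prob:3}. The converse is symmetric: an optimal $(\hat x^*, \hat z^*)$ of \cref{prob:3} is perturbed toward $(x_0, z_0)$, statement~2 of \cref{def:equivalence} yields strict primal-feasibility of each perturbed iterate, and a limit recovers $\hat x^*$ as feasible (hence optimal) for \cref{prob:1}. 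The main obstacle is precisely this extraction of a limit $\hat z^*$: \cref{def:equivalence} only asserts existence of some $z_\lambda$ for each strictly feasible $x_\lambda$ and gives no uniform control as $\lambda \to 0^+$. In practice this is handled by an additional coercivity or closedness assumption on $\Theta$ in the $z$-variable, which will hold in the concrete SDP transformations used later in the paper; in a fully general statement, one should impose or verify such a condition separately.
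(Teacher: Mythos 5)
The paper offers no proof of this lemma at all --- it is imported from \cite{lee2021lossless} without argument --- so there is nothing in-paper to compare against and I can only assess your attempt on its own terms. Your first direction, $\hat p^* \le p^*$, is sound: sliding a (near-)optimal point of \cref{prob:1} toward the Slater point keeps it in $\cal D$ and forces $\Phi \prec 0$ by matrix-convexity, statement~1 of \cref{def:equivalence} (with $U=0$) then supplies a feasible companion $z_\lambda$ for \cref{prob:3}, and continuity of $f$ closes the limit. Your candor about the missing compactness/closedness needed to extract a limit $\hat z^*$ in the optimal-solution correspondence is also well placed.

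The genuine gap is in the direction $p^* \le \hat p^*$. You perturb an arbitrary feasible $(\hat x,\hat z)$ of \cref{prob:3} toward a strictly feasible $(x_0,z_0)$ and assert $\Theta(\hat x_\lambda,\hat z_\lambda)\prec 0$ ``by convexity of the feasible region of \cref{prob:3}.'' Nothing in the hypotheses gives you that: Slater's condition is assumed only for \cref{prob:1}, and \cref{prob:3} is precisely the problem the paper does \emph{not} assume convex --- in the intended application (the proof of \cref{theorem1}) the constraint \eqref{eq0} contains the bilinear block $G_k F_k^T$, so the feasible set of the transformed problem is non-convex and the segment from $(\hat x,\hat z)$ to $(x_0,z_0)$ can leave it. Without that step you cannot place $(\hat x_\lambda,\hat z_\lambda)$ into any ${\cal G}(U,t)$ --- note that ${\cal G}(U,t)$ demands strict negativity of the whole matrix $\Theta-\mathrm{diag}(U,0)$ with the lower-right perturbation block fixed at zero, so mere feasibility $\Theta(\hat x,\hat z)\preceq 0$ does not suffice --- and statement~2 of \cref{def:equivalence} never becomes applicable. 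Closing this direction requires an additional ingredient: either that every feasible point of \cref{prob:3} is a limit of strictly feasible ones (a Slater-type condition for \cref{prob:3} itself), or the lossless convexification of \cref{def:lossless-convexification}, which is exactly the device the paper introduces afterwards for this purpose. As written, the second inequality, and hence $p^*=\hat p^*$, is not established.
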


In the following, we study a convexification of matrix inequality constrained optimizations, which have a special property to be addressed soon. Toward this goal, let us consider the following optimization problem.
\begin{problem}[Transformation~II]\label{prob:4}
Solve
\begin{align*}
&\inf_{(v,w) \in h({\cal D}\times {\mathbb R}^m) } g(v,w)\quad {\rm{s.t.}}\quad \Omega (v,w) \preceq 0
\end{align*}
for some mapping $h$ such that $h({\cal D}\times {\mathbb R}^m)$ is convex, where $\Omega: {\mathbb R}^n \times {\mathbb R}^m \to {\mathbb S}^{\hat n+\hat m}$ and $g: {\mathbb R}^n \times {\mathbb R}^m \to {\mathbb R}$ are convex, and $f$ and $\Theta$ can be expressed as
\begin{align*}
f (x) =& g (h(x,z)) = (g  \circ h)(x,z)\\
\Theta(x,z) =& \Omega(h(x,z)) = (\Omega \circ h)(x,z)
\end{align*}
\end{problem}

Note that~\cref{prob:4} is convex, and hence will be called a convexification of~\cref{prob:3}.
In particular, we will consider a special convexification called lossless convexification defined below.
\begin{definition}[Lossless convexification]\label{def:lossless-convexification}
Define the following sets associated with~\cref{prob:3} and~\cref{prob:4}:
\begin{align}
{\cal F}: =& \{ (x,z)\in {\cal D} \times {\mathbb R}^m:,\Theta(x,z) \prec 0\},\label{eq:24}\\
{\cal G}:=& \left\{ {(v,w) \in h({\cal D}\times {\mathbb R}^n): \Omega(v,w) \prec 0} \right\},\label{eq:25}
\end{align}
and suppose that $h$ is such that $h:{\cal F} \to {\cal G}$ is bijection.
Then,~\cref{prob:3} is said to be a lossless convexification of~\cref{prob:2}.
\end{definition}

An implication of~\cref{def:lossless-convexification} is that solutions of~\cref{prob:4} have bijective correspondences to solutions of~\cref{prob:3}. Therefore, even if~\cref{prob:3} is nonconvex, its solutions can be found from the convex~\cref{prob:4}.
Moreover, another property is that the existence of such a lossless convexification ensures the strong duality of the original~\cref{prob:1} (with the Slater's condition).
This result is formally summarized below.
\begin{lemma}[Strong duality]\label{thm:strong-duality2-matrix}
We suppose that~\cref{prob:3} and~\cref{prob:1} are equivalent, and~\cref{prob:4} is a lossless convexification of~\cref{prob:3}. If~\cref{prob:1} satisfies the Slater's condition, then the strong duality holds for~\cref{prob:1}.
\end{lemma}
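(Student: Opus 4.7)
The plan is to bootstrap strong duality for \cref{prob:1} from strong duality for the convex \cref{prob:4}, transporting the Slater certificate forward along the equivalence linking \cref{prob:1} and \cref{prob:3} and the bijection $h$ of the lossless convexification, then transporting dual values back. First, I would lift the Slater point: given $x^* \in \mathbf{relint}(\mathcal{D})$ with $\Phi(x^*) \prec 0$, applying \cref{def:equivalence} with $U = 0$ and any $t \ge f(x^*)$ produces a $z^*$ such that $(x^*, z^*) \in \mathcal{G}(0, t)$, i.e., $\Theta(x^*, z^*) \prec 0$. The bijection $h: \mathcal{F} \to \mathcal{G}$ from \cref{def:lossless-convexification} then sends $(x^*, z^*)$ to $(v^*, w^*) := h(x^*, z^*) \in \mathcal{G}$, so $\Omega(v^*, w^*) \prec 0$. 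Because $h(\mathcal{D}\times\mathbb{R}^m)$ is convex and $g, \Omega$ are convex, \cref{prob:4} is itself a convex program with a Slater point, so strong duality gives $\tilde p^* = \tilde d^*$, where $\tilde p^*, \tilde d^*$ denote its primal/dual optima.

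Next I would match primal values across the three problems. The equivalence lemma preceding the statement already gives $p^* = \hat p^*$. The factorizations $f = g \circ h$ and $\Theta = \Omega \circ h$, combined with the bijection on strict-interior feasibility sets and continuity of $g$, propagate to $\hat p^* = \tilde p^*$, and hence $p^* = \tilde p^*$.

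For the dual side, I would use the identity
\[
L_3(x, z, \tilde\Lambda) = f(x) + Tr(\tilde\Lambda\,\Theta(x,z)) = g(h(x,z)) + Tr(\tilde\Lambda\,\Omega(h(x,z))) = L_4(h(x,z), \tilde\Lambda)
\]
for every $\tilde\Lambda \succeq 0$. Taking the infimum over $(x,z) \in \mathcal{D}\times \mathbb{R}^m$ and using that $h$ maps onto the domain of \cref{prob:4} shows that the dual function of \cref{prob:3} coincides with that of \cref{prob:4}, whence $d_3^* = \tilde d^*$. To close the loop, I would construct from an optimal dual multiplier $\tilde\Lambda^*$ for \cref{prob:4} a multiplier $\Lambda^* \succeq 0$ for \cref{prob:1} satisfying $g(\Lambda^*) \ge \tilde d^*$; chaining this with weak duality yields $p^* \ge d^* \ge \tilde d^* = \tilde p^* = p^*$, forcing $d^* = p^*$.

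The main obstacle is exactly this last construction: extracting a dual multiplier for the reduced constraint $\Phi(x) \preceq 0$ from one acting on the augmented constraint $\Theta(x,z) \preceq 0$ without loss in the Lagrangian lower bound. The natural candidate is the $(1,1)$-block of $\tilde\Lambda^*$, guided by the block structure imposed in \cref{def:equivalence} on the perturbation sets $\mathcal{F}(U,t)$ and $\mathcal{G}(U,t)$. Verifying that partial minimization of $L_3(\cdot, \cdot, \tilde\Lambda^*)$ over the auxiliary variable $z$ reproduces, up to an additive constant, the Lagrangian of \cref{prob:1} at this reduced multiplier is the crux of the proof, and it depends on how the equivalence-preserving transformation interacts with the $\Theta$-$\Phi$ block structure.
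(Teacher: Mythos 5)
First, note that the paper itself does not prove this lemma: it is imported from the cited reference on lossless convexification, so there is no in-paper argument to compare against, and your proposal must stand on its own.

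Your outline is sound up to a point. Lifting the Slater point through \cref{def:equivalence} (with $U=0$) and then through the bijection $h$ to get a strictly feasible point of \cref{prob:4}, matching $p^*=\hat p^*=\tilde p^*$ (the direction $\hat p^*\ge\tilde p^*$ is immediate from $f=g\circ h$ and $\Theta=\Omega\circ h$; the reverse needs the Slater point to pass from strictly feasible infima to feasible infima, which is a minor patch), and observing that the dual functions of \cref{prob:3} and \cref{prob:4} coincide because $h$ surjects onto $h({\cal D}\times{\mathbb R}^m)$ --- all of this correctly yields strong duality for the \emph{augmented} \cref{prob:3}. The genuine gap is the one you flag yourself and then leave open: producing a multiplier $\Lambda^*\in{\mathbb S}^{\hat n}_+$ for \cref{prob:1} with $g(\Lambda^*)\ge p^*$ from the multiplier $\tilde\Lambda^*\in{\mathbb S}^{\hat n+\hat m}_+$ of the augmented problem. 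The $(1,1)$-block candidate cannot be justified by direct Lagrangian algebra: \cref{def:equivalence} relates only the \emph{sign patterns} of $\Phi(x)$ and $\Theta(x,z)$ (their feasible sets under perturbation), not the numerical values of $Tr(\tilde\Lambda\,\Theta(x,z))$ versus $Tr(\Lambda_{11}\Phi(x))$; in the concrete Schur-complement transformations used later in the paper, the $(1,1)$ block of $\Theta$ is not $\Phi$, so partial minimization of $L_3$ over $z$ does not reproduce $f(x)+Tr(\Lambda_{11}\Phi(x))$ up to a constant.

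The missing ingredient is to route the multiplier transfer through the perturbation (value) function rather than the Lagrangian. Define $p_1(U):=\inf\{f(x):x\in{\cal D},\ \Phi(x)\preceq U\}$ and note that the parametrized sets ${\cal F}(U,t)$ and ${\cal G}(U,t)$ in \cref{def:equivalence} are designed precisely so that $p_1(U)$ equals the value function of \cref{prob:3} under the block-diagonal perturbation $\mathrm{diag}(U,0)$, which in turn dominates the \emph{convex} value function $q(U)$ of \cref{prob:4} under the same perturbation, with equality at $U=0$. Slater's condition places $0$ in the interior of $\mathrm{dom}(q)$, so $q$ admits a subgradient at $0$; monotonicity of $q$ in $U$ forces the corresponding slope $\Lambda^*$ to be positive semidefinite, and the affine minorant $p_1(U)\ge q(U)\ge p^*-Tr(\Lambda^* U)$ evaluated at $U=\Phi(x)$ gives $L(x,\Lambda^*)\ge p^*$ for every $x\in{\cal D}$, hence $g(\Lambda^*)\ge p^*$ and $d^*=p^*$. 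This is the step your argument needs and does not supply; without it the proof establishes strong duality only for \cref{prob:3} and \cref{prob:4}, not for \cref{prob:1}.
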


\section{Finite-horizon LQG problem}
In this section, we turn our attention to the optimal control problem, which is our main concern in this paper.
Consider the stochastic linear time-invariant (LTI) system
\begin{align}
&x(k+1)=Ax(k)+Bu(k)+w(k),\label{stochastic-LTI-system2}
\end{align}
where $k\in {\mathbb N}$, $x(k)\in {\mathbb R}^n$ is the state
vector, $u(k)\in {\mathbb R}^m$ is the input vector, $x(0)\sim
{\cal N}(0,W_f)$ and $w(k)\sim {\cal N}(0,W)$ with $W_f\succ 0$ and $W\succ 0$ are mutually independent Gaussian random vectors. In this paper, we consider the following finite-horizon linear quadratic Gaussian (LQG) problem:
\begin{problem}[Finite-horizon LQG problem]\label{finite-horizon-LQR-problem}
Solve
\begin{align*}
&\min_{F_0,\ldots,F_{N-1}\in
{\mathbb R}^{m \times n}}\,{\mathbb E}(x(k)^T Q_f x(k))+ \sum_{k=0}^{N-1}{{\mathbb E}\left(\begin{bmatrix}
   x(k)\\ u(k)\\
\end{bmatrix}^T \begin{bmatrix}
   Q & 0 \\ 0 & R  \\
\end{bmatrix} \begin{bmatrix}
   x(k)\\ u(k) \\
\end{bmatrix} \right)}\\
&{\rm s.t.}\quad x(k+1)=Ax(k)+Bu(k)+w(k),\quad u(k)=F_k x(k).
\end{align*}
\end{problem}

A collection of assumptions that will be used throughout the paper
is summarized below.
\begin{assumption}\label{assumption:1}
In this paper, we assume that $Q_f \succeq 0,Q \succeq 0,R \succ 0,W_f \succ 0$, and
$W \succ 0$.
\end{assumption}

If we define the covariance of the augmented vector $[x(k)^T,u(k)^T]^T\in {\mathbb R}^{n \times m}$
\begin{align*}
&S_k={\mathbb E}\left( \begin{bmatrix} x(k)\\ u(k)\\ \end{bmatrix} \begin{bmatrix}
   x(k)\\
   u(k)\\
\end{bmatrix}^T \right),\quad k \in \{0,\ldots,N\},
\end{align*}
then,~\cref{finite-horizon-LQR-problem} can be equivalently
converted to the matrix equality constrained optimization problem.
\begin{problem}\label{primal-LQR}
Solve
\begin{align*}
&J_p^*:=\min_{S_0,\ldots,S_{N-1}\in {\mathbb S}^{n+m},F_0,\ldots,F_{N-1}\in {\mathbb R}^{m \times n}}\,J_p(\{S_k\}_{k=0}^{N-1})\\
&{\rm s.t.}\quad \Phi (F_k,S_{k-1})=S_k\quad k \in \{1,\ldots,N-1\},\quad \begin{bmatrix}
   I_n\\
   F_0\\
\end{bmatrix} W_f \begin{bmatrix}
   I_n\\
   F_0\\
\end{bmatrix}^T = S_0,
\end{align*}
where
\begin{align*}
J_p(\{S_k\}_{k=0}^{N-1}):=& Tr\left(Q_f
\left(\begin{bmatrix}
   A^T\\B^T\\
\end{bmatrix}^T S_{N - 1} \begin{bmatrix}
   A^T\\B^T\\
\end{bmatrix}+W\right) \right)+\sum_{k=0}^{N-1}{ Tr\left( \begin{bmatrix}
 Q & 0\\ 0 & R\\
\end{bmatrix} S_k\right)}\\
\Phi (F, S):=&\begin{bmatrix}
   I_n \\F\\
\end{bmatrix}\left( \begin{bmatrix}
   A^T\\ B^T\\
\end{bmatrix}^T S \begin{bmatrix}
   A^T\\B^T\\
\end{bmatrix}+W\right)\begin{bmatrix}
   I_n\\F\\
\end{bmatrix}^T
\end{align*}
\end{problem}

In~\cref{primal-LQR}, the matrix equality constraints represent the covariance updates. In this paper, instead of dealing with~\cref{primal-LQR} in its present form, we will consider the modified problem (SDP relaxation) by replacing the matrix equalities in~\cref{primal-LQR} by inequalities.
\begin{problem}[Primal problem]\label{primal-LQR-inequality-formulation}
Solve
\begin{align*}
&p_{\rm opt}:=\min_{S_0,\ldots,S_{N-1}\in {\mathbb S}^{n+m}, F_0,\ldots,F_{N-1}\in {\mathbb R}^{m \times
n}}\,\,J_p(\{S_k\}_{k=0}^{N-1})\\
&{\rm s.t.}\quad \Phi (F_k,S_{k-1})\preceq S_k,\quad k \in \{
1,\ldots,N-1\},\quad \begin{bmatrix}
   I\\F_0\\
\end{bmatrix} W_f \begin{bmatrix}
   I\\ F_0\\
\end{bmatrix}^T \preceq S_0.
\end{align*}
\end{problem}

Note that~\cref{primal-LQR-inequality-formulation} is not convex due to the bilinear matrix inequality constraints. We will study its solution through the Lagrangian duality. To this end, its Lagrangian dual problem can be derived as follows.
\begin{problem}[Dual problem~I]\label{dual-problem1}
Solve
\begin{align*}
d_{{\rm opt}}:=& \sup_{P_k\succeq 0,{\bar P}_k \succeq 0} g(\{(P_k
,{\bar P}_k )\}_{k=0}^{N-1})\\
=& \sup_{P_k \succeq 0,{\bar P}_k \succeq 0} \inf_{\{ S_k
,F_k\}_{k=0}^{N-1}} L(\{(S_k,F_k,P_k,{\bar P}_k)\}_{k=0}^{N-1}),
\end{align*}
where
\begin{align*}
&g(\{(P_k,{\bar P}_k)\}_{k=0}^{N-1}):=\inf_{\{S_k,F_k\}_{k=0}^{N-1}} L(\{(S_k,F_k,P_k,{\bar P}_k)\}_{k=0}^{N-1}),
\end{align*}
and $(P_k,{\bar P}_k)_{k=0}^{N-1}$ are called the dual variables.
\end{problem}

It is well-known that the dual problem is convex even if the primal is not~\cite{Boyd2004}.
In this paper, we will prove that the dual problem can be converted to an equivalent convex SDP problem.

\section{Main results}
To proceed, denote by ${\cal S}$ the set of all optimal solutions of the form $\{(F_k,S_k)\}_{k=0}^{N-1}$ of~\cref{primal-LQR-inequality-formulation}. In addition, define the mapping
${\cal F}: = \{ \{F_k\}_{k=0}^{N-1}:\{(F_k,S_k)\}_{k=0}^{N-1}\in {\cal S}\}$. We conclude that \cref{primal-LQR} is equivalent to~\cref{primal-LQR-inequality-formulation} in the following sense: if $\{F_k\}_{k=0}^{N-1}\in {\cal F}$, then it is also optimal for~\cref{primal-LQR}. This result is formally stated in the following proposition.
\begin{proposition}\label{optimal-property}
Let $\{F_k\}_{k=0}^{N-1}\in {\cal F}$. Then, it is an optimal solution of~\cref{primal-LQR}, and $J_p^*=p_{{\rm opt}}$ holds.
\end{proposition}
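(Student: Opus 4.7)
The plan is to show the two inequalities $p_{\rm opt}\le J_p^*$ and $J_p^*\le p_{\rm opt}$ separately, and simultaneously to extract, from any optimizer of the relaxation~\cref{primal-LQR-inequality-formulation}, a feasible (equality-constrained) iterate that attains the same cost in~\cref{primal-LQR}.

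First I would note that any feasible point $\{(F_k,S_k)\}_{k=0}^{N-1}$ of~\cref{primal-LQR} is automatically feasible for~\cref{primal-LQR-inequality-formulation}, since each equality $\Phi(F_k,S_{k-1})=S_k$ implies the corresponding inequality $\Phi(F_k,S_{k-1})\preceq S_k$ (and similarly for the initial constraint). As the objective $J_p$ is identical in both problems, this immediately yields $p_{\rm opt}\le J_p^*$.

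For the reverse direction, let $\{(F_k,S_k)\}_{k=0}^{N-1}\in \mathcal S$ be an arbitrary optimizer of~\cref{primal-LQR-inequality-formulation}, so that in particular $\{F_k\}_{k=0}^{N-1}\in \mathcal F$. Using the same gains $\{F_k\}$, I would construct new covariance matrices $\{\tilde S_k\}$ recursively by enforcing equalities,
\[
\tilde S_0 := \begin{bmatrix} I_n\\ F_0\end{bmatrix} W_f \begin{bmatrix} I_n\\ F_0\end{bmatrix}^T,\qquad \tilde S_k := \Phi(F_k,\tilde S_{k-1}),\quad k=1,\ldots,N-1.
\]
By construction $\{(F_k,\tilde S_k)\}$ is feasible for~\cref{primal-LQR}. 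The key observation is the monotonicity of $\Phi(F,\cdot)$: since $\Phi(F,S)=\begin{bmatrix}I\\F\end{bmatrix}\bigl([A^T\ B^T]^T S[A^T\ B^T]+W\bigr)\begin{bmatrix}I\\F\end{bmatrix}^T$ is affine and order-preserving in $S$ with respect to $\preceq$, the feasibility of $\{(F_k,S_k)\}$ in~\cref{primal-LQR-inequality-formulation} and a straightforward induction give $\tilde S_k \preceq S_k$ for all $k\in\{0,\ldots,N-1\}$.

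Because $Q\succeq 0$, $R\succeq 0$, $Q_f\succeq 0$ (by~\cref{assumption:1}), and $Tr(M\cdot)$ is monotone in the PSD order whenever $M\succeq 0$, every term of $J_p$ is monotone in the $S_k$'s. Hence
\[
J_p^* \;\le\; J_p(\{\tilde S_k\}_{k=0}^{N-1}) \;\le\; J_p(\{S_k\}_{k=0}^{N-1}) \;=\; p_{\rm opt}.
\]
Combined with the first step, I obtain $J_p^*=p_{\rm opt}$, and since $\{(F_k,\tilde S_k)\}$ is feasible for~\cref{primal-LQR} with cost $J_p^*$, the gains $\{F_k\}_{k=0}^{N-1}\in\mathcal F$ are optimal for~\cref{primal-LQR}. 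I expect the only subtle point to be the monotonicity step and making explicit that the monotonicity of $\Phi$ propagates through the recursion; the rest follows from PSD-monotonicity of traces and feasibility bookkeeping.
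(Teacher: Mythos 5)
Your proposal is correct and follows essentially the same route as the paper's proof: construct equality-achieving covariances $\tilde S_k$ with the same gains, show $\tilde S_k \preceq S_k$, and use monotonicity of the cost in the PSD order together with the feasible-set inclusion to conclude $J_p^* = p_{\rm opt}$. The only difference is that you make explicit the induction and the monotonicity of $\Phi(F,\cdot)$ and of $J_p$, which the paper asserts with ``Clearly, $\bar S_k \preceq S_k$.''
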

\begin{proof}
Let $\{ F_k \} _{k = 0}^{N - 1}  \in {\cal F}$ and construct $\{
{\bar S}_k \} _{k = 0}^{N - 1} $ such that
\begin{align*}
&\Phi (F_k ,{\bar S}_{k - 1} ) = {\bar S}_k ,\quad k
\in \{ 1,2, \ldots ,N - 1\},\quad \begin{bmatrix}
   I  \\
   {F_0 }  \\
\end{bmatrix} W_f \begin{bmatrix}
   I  \\
   {F_0 }  \\
\end{bmatrix}^T  = {\bar S}_0
\end{align*}

Clearly, ${\bar S}_k  \preceq S_k ,\forall k \in \{
0,1, \ldots ,N - 1\}$ and hence, $p_{{\rm opt}}  \ge J_p (\{
{\bar S}_k \} _{k = 0}^{N - 1} )$. However, since $\{ F_k
,{\bar S}_k \} _{k = 0}^{N - 1}$ is also a feasible point
of~\cref{primal-LQR-inequality-formulation}, and thus, $J_p (\{ {\bar S}_k \}
_{k = 0}^{N - 1} ) \ge p_{{\rm{opt}}}$. Therefore, $J_p (\{
{\bar S}_k \} _{k = 0}^{N - 1} ) = p_{\rm opt}$ and $\{
F_k ,{\bar S}_k\}_{k=0}^{N - 1} $ is an optimal
solution of~\cref{primal-LQR-inequality-formulation}. Since~\cref{primal-LQR} has a feasible set included by the
feasible set of~\cref{primal-LQR-inequality-formulation}, and the optimal solution
$\{F_k,{\bar S}_k \}_{k=0}^{N - 1} $ of~\cref{primal-LQR-inequality-formulation} takes equalities in the constraints of~\cref{primal-LQR-inequality-formulation}, $\{ F_k ,{\bar S}_k\} _{k 0}^{N-1} $ is also optimal solution of~\cref{primal-LQR}. The second statement is derived directly from the first statement. This
completes the proof.
\end{proof}

From~\cref{optimal-property}, we can conclude that~\cref{primal-LQR-inequality-formulation} can replace~\cref{finite-horizon-LQR-problem}. Therefore, in the sequel, we will address~\cref{primal-LQR-inequality-formulation} instead of~\cref{finite-horizon-LQR-problem}. For any $P_0,\ldots,P_{N-1}\in {\mathbb S}_+^{n+m}$, and ${\bar P}_0,\ldots,{\bar P}_{N-1}\in {\mathbb S}_+^{n+m}$, define the Lagrangian function of~\cref{primal-LQR-inequality-formulation}
\begin{align*}
L(\{(S_k,F_k,P_k,{\bar P}_k)\}_{k=0}^{N-1}):=& J_p (\{S_k\}_{k=0}^{N-1})+\sum_{k=1}^{N-1} {Tr((\Phi(F_k,S_{k-1})-S_k) P_k)}\\
&+Tr\left( \left(\begin{bmatrix}
   I\\F_0\\
\end{bmatrix}W_f \begin{bmatrix}
   I\\ F_0\\
\end{bmatrix}^T- S_0\right) P_0 \right)-\sum_{k=0}^{N-1}{Tr( S_k {\bar
P}_k)}
\end{align*}

Rearranging some terms, it can be represented by
\begin{align}
L(\{( S_k,F_k,P_k,{\bar P}_k)\}_{k=0}^{N-1})=& J_d (\{P_k,F_k\}_{k=0}^{N-1})\nonumber\\
&+ Tr\left(\left( \begin{bmatrix}
   A^T\\ B^T\\
\end{bmatrix} Q_f \begin{bmatrix}
   A^T\\B^T\\
\end{bmatrix}^T - P_{N-1} + \begin{bmatrix}
   Q & 0 \\ 0 & R\\
\end{bmatrix}-{\bar P}_{N-1}\right) S_{N-1}
\right)\nonumber\\
& + \sum_{k=1}^{N-1} {Tr((\Gamma(F_k,P_k)-P_{k-1}-{\bar P}_{k-1}) S_{k-1})}.\label{eq1}
\end{align}

The corresponding Lagrangian dual problem~\cite[chapter 5]{Boyd2004} is~\cref{dual-problem1}. In the following two theorems, we establish a relation between the dual optimal solution and the Riccati equation.
\begin{theorem}[Strong duality]\label{theorem1}
The strong duality holds, i.e., $p_{\rm opt}=d_{\rm opt} $;
\end{theorem}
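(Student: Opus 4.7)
The plan is to invoke the lossless-convexification strong-duality result \cref{thm:strong-duality2-matrix}. I would identify~\cref{primal-LQR-inequality-formulation} with the abstract~\cref{prob:1}, build a lifted reformulation that will play the role of~\cref{prob:3}, and then exhibit a convex reformulation that will play the role of~\cref{prob:4}. This reduces the theorem to three verifications: equivalence of the lifted problem to~\cref{primal-LQR-inequality-formulation} in the sense of~\cref{def:equivalence}, lossless convexification of the lifted problem in the sense of~\cref{def:lossless-convexification}, and Slater's condition for~\cref{primal-LQR-inequality-formulation}.

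For the lifting, let $V(F):=\begin{bmatrix} I \\ F \end{bmatrix}$ and $M:=\begin{bmatrix} A^T \\ B^T \end{bmatrix}$, so each primal constraint takes the form $V(F_k)(M^T S_{k-1} M + W) V(F_k)^T - S_k \preceq 0$, which is exactly of the type treated by~\cref{lemma:extended-Schur-complement-II}. Introducing a slack matrix $G_k\in \mathbb{R}^{n\times n}$ yields the equivalent inequality
\[
\begin{bmatrix} -S_k & * \\ G_k V(F_k)^T & M^T S_{k-1} M + W - G_k - G_k^T \end{bmatrix} \preceq 0,
\]
and the initial-condition constraint is lifted analogously with $W$ replaced by $W_f$ and the term $M^T S_{k-1}M$ omitted. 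Equivalence in the sense of~\cref{def:equivalence} follows in both directions directly from~\cref{lemma:extended-Schur-complement-II}: the choice $G_k=M^T S_{k-1}M+W$ (resp.\ $G_0=W_f$) is a canonical witness of the lifted inequality whenever the original one holds, while the reverse direction is the congruence computation in the proof of~\cref{lemma:extended-Schur-complement-II}.

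For the convexification, the only remaining non-convexity is the entry $G_k V(F_k)^T = [\,G_k\;\;G_k F_k^T\,]$. Introducing the new variable $Z_k:=G_k F_k^T$ makes every lifted constraint linear in $(S_{k-1},S_k,G_k,Z_k)$, so the resulting SDP is convex. The key point for losslessness is that on the strict feasibility set, $M^T S_{k-1} M + W - G_k - G_k^T \prec 0$ together with $W\succ 0$ and $S_{k-1}\succeq 0$ (which is propagated recursively from $S_0 \succeq V(F_0)W_f V(F_0)^T \succeq 0$) forces $G_k+G_k^T \succ 0$; in particular $G_k$ is nonsingular, so the map $(F_k,S_k,G_k)\mapsto (Z_k,S_k,G_k)$ is a bijection between the strict feasibility sets defined in~\eqref{eq:24}--\eqref{eq:25}, with inverse $F_k^T = G_k^{-1} Z_k$. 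This is precisely the requirement in~\cref{def:lossless-convexification}.

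Slater's condition for~\cref{primal-LQR-inequality-formulation} is straightforward: for any gains $\{F_k\}$, the choice $S_0:=V(F_0)W_f V(F_0)^T+\varepsilon I$ and $S_k:=\Phi(F_k,S_{k-1})+\varepsilon I$ for small $\varepsilon>0$ is strictly feasible. The step I expect to be the main obstacle is the bijectivity argument in the convexification: one must ensure that $G_k$ is invertible uniformly on the strict interior and that the recovery $F_k^T=G_k^{-1}Z_k$ lands back inside the intended domain $\cal D$, so that the convex SDP really parametrizes every strictly feasible point of the lifted non-convex problem rather than a proper subset. Once these three verifications are in place,~\cref{thm:strong-duality2-matrix} immediately delivers $p_{\rm opt}=d_{\rm opt}$.
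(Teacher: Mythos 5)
Your proposal is correct and follows essentially the same route as the paper's own proof: strict feasibility via an $\varepsilon$-perturbation, lifting each bilinear constraint with \cref{lemma:extended-Schur-complement-II} through a slack matrix $G_k$, convexifying by the change of variables $G_kF_k^T$ (the paper's $H_k=F_kG_k^T$ is the transposed version of your $Z_k$), noting that $G_k+G_k^T\succ 0$ on the strict feasibility set makes this a bijection, and then invoking \cref{thm:strong-duality2-matrix}. The bijectivity concern you flag is exactly the point the paper also rests on ($G_k$ nonsingular in the feasible set), and your justification of it is sound.
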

\begin{proof}
To prove the strong duality, we will use the results in~\cref{sec:strong-duality}.
We will first prove that~\cref{primal-LQR-inequality-formulation} is strictly feasible to apply~\cref{thm:strong-duality2-matrix}.
With $F_k  = 0,\forall k \in \{
0,1, \ldots,N - 1\}$ and any $\varepsilon > 0$, construct
matrices $\{ S_k \} _{k = 0}^{N - 1}$ as follows:
\begin{align*}
&\begin{bmatrix}
  {I_n }  \\
  {F_0 }  \\
\end{bmatrix} W_f \begin{bmatrix}
 {I_n }  \\
 {F_0 }  \\
\end{bmatrix}^T  + \varepsilon I_n  = S_0,\quad \Phi (F_k ,S_{k - 1} ) + \varepsilon I_n = S_k
\end{align*}
The set $\{ F_k ,S_k \} _{k = 0}^{N - 1}$ satisfies the
constraints of~\cref{primal-LQR} with strict inequalities.
Therefore, we conclude that~\cref{primal-LQR-inequality-formulation} is strictly feasible.

Next, we will prove that the constraints in~\cref{primal-LQR-inequality-formulation}
can be equivalently converted to linear matrix inequality constraints. In particular, to apply the extended Schur complement,~\cref{lemma:extended-Schur-complement-II}, we first replace the non-strict matrix inequality ``$\preceq$'' and strict matrix inequality ``$\prec$'', and replace ``$\min$'' with ``$\inf$,'' which do not change the result. Then, by~\cref{lemma:extended-Schur-complement-II}, we have that $\Phi (F_k ,S_{k - 1} ) \prec S_k$ holds if and only if there exists $G_k  \in {\mathbb
R}^{n \times n}$ such that
\begin{align}
&\left[ {\begin{array}{*{20}c}
   {S_k } & *  \\
   {\left[ {\begin{array}{*{20}c}
   {G_k } & {G_k F_k^T }  \\
\end{array}} \right]} & {G_k  + G_k^T  - \left[ {\begin{array}{*{20}c}
   {A^T }  \\
   {B^T }  \\
\end{array}} \right]^T S_{k - 1} \left[ {\begin{array}{*{20}c}
   {A^T }  \\
   {B^T }  \\
\end{array}} \right] - W}  \\
\end{array}} \right] \succ 0\label{eq0}
\end{align}

Similarly, $\begin{bmatrix}
   I\\F_0\\
\end{bmatrix} W_f \begin{bmatrix}
   I\\ F_0\\
\end{bmatrix}^T \prec S_0$ is equivalent to
\begin{align}
\left[ {\begin{array}{*{20}c}
   {S_0 } & *  \\
   {\left[ {\begin{array}{*{20}c}
   {G_0 } & {G_0 F_0^T }  \\
\end{array}} \right]} & {G_0  + G_0^T  - W_f }  \\
\end{array}} \right]\succ 0.\label{eq2}
\end{align}
Next, the strict matrix inequality ``$\prec$'' can be replaced with the non-strict matrix inequality ``$\preceq$'', and ``$\min$'' can be replaced with ``$\inf$.'' Therefore, we see that~\cref{primal-LQR-inequality-formulation} is equivalent to
\begin{align*}
&p_{\rm opt}:=\min_{S_0,\ldots,S_{N-1}\in {\mathbb S}^{n+m}, F_0,\ldots,F_{N-1}\in {\mathbb R}^{m \times
n}}\,\,J_p(\{S_k\}_{k=0}^{N-1})\quad {\rm s.t.}\quad \eqref{eq0},\eqref{eq2}
\end{align*}
in the sense of~\cref{def:equivalence}. In the feasible set, $G_k$ is nonsingular. Therefore, we can find the bijective mapping
\[
h:\left[ {\begin{array}{*{20}c}
   S  \\
   {S'}  \\
   G  \\
   F  \\
\end{array}} \right] \mapsto \left[ {\begin{array}{*{20}c}
   S  \\
   {S'}  \\
   G  \\
   {FG^T }  \\
\end{array}} \right]
\]
to change variables
\[
h\left( {\left[ {\begin{array}{*{20}c}
   {S_k }  \\
   {S_{k - 1} }  \\
   {G_k }  \\
   {F_k }  \\
\end{array}} \right]} \right) = \left[ {\begin{array}{*{20}c}
   {S_k }  \\
   {S_{k - 1} }  \\
   {G_k }  \\
   {F_k G_k^T }  \\
\end{array}} \right] = \left[ {\begin{array}{*{20}c}
   {S_k }  \\
   {S_{k - 1} }  \\
   {G_k }  \\
   {H_k }  \\
\end{array}} \right],
\]
and~\eqref{eq0} and~\eqref{eq2} can be converted to the LMIs
\[
\left[ {\begin{array}{*{20}c}
   {S_k } & *  \\
   {\left[ {\begin{array}{*{20}c}
   {G_k } & {H_k }  \\
\end{array}} \right]} & {G_k  + G_k^T  - \left[ {\begin{array}{*{20}c}
   {A^T }  \\
   {B^T }  \\
\end{array}} \right]^T S_{k - 1} \left[ {\begin{array}{*{20}c}
   {A^T }  \\
   {B^T }  \\
\end{array}} \right] - W}  \\
\end{array}} \right]\succeq 0
\]
and
\[
\left[ {\begin{array}{*{20}c}
   {S_0 } & *  \\
   {\left[ {\begin{array}{*{20}c}
   {G_0 } & {H_0 }  \\
\end{array}} \right]} & {G_0  + G_0^T  - W_f }  \\
\end{array}} \right] \succeq 0
\]

Now, we can invoke~\cref{thm:strong-duality2-matrix} to prove that the strong duality holds for~\cref{primal-LQR-inequality-formulation}.
In particular, according to~\cref{thm:strong-duality2-matrix}, if~\cref{primal-LQR-inequality-formulation} is strictly feasible, and~\cref{primal-LQR-inequality-formulation} admits an equivalent convex SDP through a bijective mapping of variables, then it satisfies the strong duality. This completes the proof.
\end{proof}

According to~\cite[Chap.~5.5, pp.~243]{Boyd2004}, for any optimization problem with differentiable objective and constraint functions for which strong duality obtains, any pair of primal and dual optimal points must satisfy the KKT conditions. Since the strong duality holds for~\cref{primal-LQR-inequality-formulation}, we can obtain some information on the solution using the KKT condition. One result is that the Riccati equation can be derived from the KKT condition.
\begin{theorem}\label{theorem1b}
Consider the Riccati equation
\begin{align}
&A^T X_{k+1} A-A^T X_{k+1} B(R+B^T X_{k+1} B)^{-1} B^T X_{k+1} A+Q=X_k\label{Riccati}
\end{align}
for all $k \in \{0,\ldots,N-1\} $ with $X_N=Q_f$,
and define $\{( S_k,F_k,P_k,{\bar P}_k)\}_{k=0}^{N-1}$ with
\begin{align}
F_k=&-(R+B^T X_{k+1} B)^{-1} B^T X_{k+1} A,\nonumber\\
S_k=& \Phi (F_k,S_{k-1}),\quad S_0=\begin{bmatrix}
I\\ F_0\\
\end{bmatrix} W_f \begin{bmatrix}
 I\\ F_0\\
\end{bmatrix}^T,\nonumber\\
P_k =& \begin{bmatrix}
   Q + A^T X_{k+1} A & A^T X_{k+1} B\\
   B^T X_{k+1} A & R + B^T X_{k+1} B\\
\end{bmatrix},\nonumber\\
{\bar P}_k=& 0,\quad k \in \{0,\ldots,N-1\},
\label{primal-optimal-point}
\end{align}

Then, $\{(S_k,F_k)\}_{k=0}^{N-1}$ is an primal optimal point of~\cref{primal-LQR-inequality-formulation} and $\{(P_k,{\bar P}_k)\}_{k=0}^{N-1}$ is the corresponding dual optimal point of~\cref{primal-LQR-inequality-formulation}.
\end{theorem}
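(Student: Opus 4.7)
The plan is to verify the four KKT conditions for~\cref{primal-LQR-inequality-formulation} at the proposed tuple and then invoke~\cref{theorem1}. Since strong duality holds and the objective and constraint maps are differentiable (in fact polynomial in $(F_k,S_k)$), any KKT point is automatically a primal--dual optimal pair~\cite[Chap.~5.5]{Boyd2004}.

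Primal feasibility is immediate from the construction, because $S_0$ and $S_k = \Phi(F_k, S_{k-1})$ make the constraints of~\cref{primal-LQR-inequality-formulation} hold with equality. For dual feasibility, $\bar P_k = 0 \succeq 0$ trivially, and I would rewrite
\[
P_k = \begin{bmatrix} Q & 0 \\ 0 & R \end{bmatrix} + \begin{bmatrix} A^T \\ B^T \end{bmatrix} X_{k+1} \begin{bmatrix} A & B \end{bmatrix},
\]
so that $P_k \succeq 0$ reduces to $X_{k+1} \succeq 0$. The latter follows by a backward induction on~\eqref{Riccati}: starting from $X_N = Q_f \succeq 0$, the recursion preserves positive semidefiniteness because $R + B^T X_{k+1} B \succ 0$ and a Schur-complement manipulation together with $Q \succeq 0$ yields $X_k \succeq Q \succeq 0$. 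Complementary slackness is automatic since the primal constraints are tight and $\bar P_k = 0$.

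The algebraic core is the stationarity condition. Using the rearranged Lagrangian~\eqref{eq1}, vanishing of the $S_{N-1}$-coefficient requires
\[
P_{N-1} = \begin{bmatrix} Q + A^T Q_f A & A^T Q_f B \\ B^T Q_f A & R + B^T Q_f B \end{bmatrix},
\]
which matches the proposed $P_{N-1}$ with $X_N = Q_f$. For $k = 1, \ldots, N-1$, vanishing of the $S_{k-1}$-coefficient requires $P_{k-1} = \Gamma(F_k, P_k)$, and I would reduce this to one Riccati step by first establishing the closed-loop identity $\begin{bmatrix} I & F_k^T \end{bmatrix} P_k \begin{bmatrix} I \\ F_k \end{bmatrix} = X_k$: expanding the four blocks of $P_k$ and substituting $F_k = -(R + B^T X_{k+1} B)^{-1} B^T X_{k+1} A$, two cross terms cancel against $F_k^T (R + B^T X_{k+1} B) F_k$, leaving exactly the right-hand side of~\eqref{Riccati}. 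Stationarity in $F_k$ follows from differentiating $Tr(\Phi(F_k, S_{k-1}) P_k)$: the gradient equation reads $P_{k,22} F_k \Sigma_k + P_{k,21} \Sigma_k = 0$ where $\Sigma_k = [A, B] S_{k-1} [A, B]^T + W \succ 0$ (since $W \succ 0$), so $F_k = -P_{k,22}^{-1} P_{k,21} = -(R + B^T X_{k+1} B)^{-1} B^T X_{k+1} A$ as claimed.

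The main obstacle I anticipate is not conceptual but notational: carefully bookkeeping the index shifts when pulling $S_{k-1}$ out of $Tr(\Phi(F_k, S_{k-1}) P_k)$, and ensuring the stage-cost block $\begin{bmatrix} Q & 0 \\ 0 & R \end{bmatrix}$ is absorbed into the correct coefficient so that the stationarity condition in $S_{k-1}$ aligns cleanly with one Riccati step. Once the algebra is organized, all four KKT conditions hold at the proposed tuple, and~\cref{theorem1} delivers primal--dual optimality.
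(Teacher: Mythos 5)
Your verification of the individual KKT conditions is sound and in fact more explicit than what the paper records: the closed-loop identity $\begin{bmatrix} I & F_k^T\end{bmatrix} P_k \begin{bmatrix} I \\ F_k\end{bmatrix} = X_k$, the vanishing of the $S_{k-1}$-coefficients in~\eqref{eq1}, the stationarity equation $(P_{k,22}F_k + P_{k,12}^T)\Sigma_k = 0$ with $\Sigma_k \succ 0$ (your $\Sigma_k$ is the paper's $M_{k-1}$), and the backward induction giving $X_k \succeq Q \succeq 0$ all match the conditions the paper lists. The gap is in the final logical step. You assert that, because strong duality holds and the data are differentiable, ``any KKT point is automatically a primal--dual optimal pair.'' That is the \emph{sufficiency} direction of the KKT theorem, and in~\cite{Boyd2004} it is established only for \emph{convex} problems: the argument requires that $L(\cdot,P^*,\bar P^*)$ be convex in the primal variables, so that stationarity implies global minimization of the Lagrangian, which then combines with complementary slackness and weak duality to give $g(P^*,\bar P^*) = J_p(\{S_k^*\})$. \cref{primal-LQR-inequality-formulation} is explicitly non-convex (the constraints $\Phi(F_k,S_{k-1}) \preceq S_k$ are bilinear in $(F_k,S_{k-1})$), so a stationary point of the Lagrangian need not minimize it globally, and strong duality by itself does not repair this. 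As stated, your concluding step does not follow.

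The paper closes the argument in the opposite direction: under strong duality with differentiable data, any pair of primal and dual optimal points \emph{must} satisfy the KKT system (necessity), and the paper then argues that, because $M_k = \begin{bmatrix} A & B\end{bmatrix} S_k \begin{bmatrix} A & B \end{bmatrix}^T + W$ and $W_f$ are nonsingular, the KKT system admits a \emph{unique} solution, namely~\eqref{primal-optimal-point}; granted attainment of the optima, that unique KKT solution is therefore forced to be the optimal pair. To repair your proof you would need either to supply this uniqueness argument (showing the KKT equations pin down $F_k$, $S_k$, $P_k$, $\bar P_k$ one by one via the invertibility of $M_k$, $W_f$, and $R + B^T X_{k+1}B$), or to prove directly that the proposed primal point attains $\inf_{\{S_k,F_k\}} L(\cdot,P^*,0)$ so that weak duality applies; mere verification of stationarity is not enough for this non-convex problem.
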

\begin{proof}
From the KTT condition of the generalized inequality constrained optimization in \cite[chap~5.9.2]{Boyd2004}, its
KKT condition can be summarized as the primal feasibility condition
\begin{align*}
&\begin{bmatrix}
   I  \\
   {F_0 }  \\
\end{bmatrix} W_f \begin{bmatrix}
   I  \\
   {F_0 }  \\
\end{bmatrix}^T  \preceq S_0 ,\quad \Phi (F_k ,S_{k - 1} ) \preceq
S_k,\quad k \in \{ 1,2, \ldots ,N - 1\}
\end{align*}
the complementary slackness condition
\begin{align}
\begin{matrix}& Tr\begin{pmatrix}\begin{pmatrix}\begin{bmatrix}
   I  \\
   {F_0 }  \\
\end{bmatrix}W_f \begin{bmatrix}
   I  \\
   {F_0 }  \\
\end{bmatrix}^T  - S_0\end{pmatrix} P_0 \end{pmatrix} =
0\\
&Tr((\Phi (F_k ,S_{k - 1} ) - S_k
) P_k ) = 0\\
&k \in \{ 1,2,\ldots ,N-1\}\\
&Tr( S_k {\bar P}_k ) = 0,\quad k \in \{ 0,1,
\ldots ,N- 1\}
\end{matrix}\label{complementary-slackness-condition}
\end{align}
and the dual feasibility condition
\begin{align*}
&P_N  = \begin{bmatrix}
   {Q_f } & 0  \\
   0 & 0  \\
\end{bmatrix},\quad \Gamma (0,P_N ) - {\bar P}_{N -
1} = P_{N-1}\\
&\Gamma (F_k ,P_k ) - {\bar P}_{k - 1}  =
P_{k - 1} ,\quad k \in \{ 1,2, \ldots N - 1\}\\
&W_f (P_{0,12}  + F_0^T P_{0,22}) + (P_{0,12}^T +
P_{0,\,22} F_0 )W_f  = 0\\
&M_k (P_{k + 1,12}  + F_{k + 1}^T P_{k + 1,22} ) + (P_{k + 1,12}^T  + P_{k + 1,22} F_{k + 1} )M_k  = 0\\
&k \in \{ 1,2, \ldots ,N - 1\}\\
&P_k \succeq 0,\quad {\bar P}_k  \succeq 0,k \in \{
0,1, \ldots N - 1\}
\end{align*}
where $M_k  = \begin{bmatrix}
   A & B  \\
\end{bmatrix} S_k \begin{bmatrix}
   A & B  \\
\end{bmatrix}^T + W$. By~\cref{assumption:1}, $M_k$ and $W_f$ are nonsingular, and hence, solving the KKT condition, we can prove that~\eqref{primal-optimal-point} uniquely solves the KKT condition. According to~\cite[Chap.~5.5, pp.~243]{Boyd2004}, for any optimization problem with differentiable objective and constraint functions for which strong duality obtains, any pair of primal and dual optimal points must satisfy the KKT conditions. Therefore, the point in~\eqref{primal-optimal-point} is the primal and dual optimal points of~\eqref{primal-LQR-inequality-formulation}. This completes the proof.
\end{proof}

\cref{theorem1} and~\cref{theorem1b} tell us that the optimal primal and dual solutions can be constructed using the solution of the Riccati equation. Conversely, the solution of the Riccati equation can be recovered from the optimal primal and dual solutions.

The dual problem in~\cref{dual-problem1} is a min-max problem, which is in general harder to solve than a minimization or maximization problem. Another dual formulation of~\cref{dual-problem1} is represented by a constrained maximization as follows:
\begin{problem}[Dual problem~II]\label{dual-LQR-inequality-formulation}
Solve
\begin{align*}
&\tilde d_{{\rm opt}}:=\max_{P_0,\ldots,P_{N-1}\in {\mathbb S}_+^{n+m}}\,\,J_d (\{
P_k,F_k\}_{k=0}^{N-1})\\
&{\rm s.t.}\\
&\Gamma (F_k,P_k)\succeq P_{k-1},k \in\{1,\ldots, N-1\},\\
&\Gamma (0,P_N)\succeq P_{N-1},\\
&\begin{bmatrix} 0\\I\\
\end{bmatrix}^T P_k \begin{bmatrix}
 0\\ I\\
\end{bmatrix} \succ 0,\quad F_k=-P_{k,22}^{-1} P_{k,12}^T,\quad k\in \{0,\ldots N-1\},
\end{align*}
where
\begin{align*}
J_d(\{P_k,F_k\}_{k=0}^{N-1}):=& Tr\left( \begin{bmatrix}
 I\\ F_0\\
\end{bmatrix}W_f \begin{bmatrix}
   I\\ F_0\\
\end{bmatrix}^T P_0\right)+ \sum_{k=1}^{N} Tr\left( \begin{bmatrix}
   I\\F_k\\
\end{bmatrix} W \begin{bmatrix}
   I\\F_k\\
\end{bmatrix}^T P_k\right)\\
\Gamma (F,P):=&\begin{bmatrix}
A^T\\B^T\\
\end{bmatrix} \begin{bmatrix}
I\\F\\
\end{bmatrix}^T P \begin{bmatrix}
I\\F\\
\end{bmatrix} \begin{bmatrix}
A^T\\B^T\\
\end{bmatrix}^T + \begin{bmatrix}
   Q & 0\\0 & R\\
\end{bmatrix},
\end{align*}
and
\begin{align}
&P_k=\begin{bmatrix}
   P_{k,11}& P_{k,12}\\
   P_{k,12}^T& P_{k,22}\\
\end{bmatrix},\quad P_N=\begin{bmatrix}
   Q_f& 0\\ 0 & 0\\
\end{bmatrix}\nonumber
\end{align}
\end{problem}

\cref{dual-LQR-inequality-formulation} is equivalent to~\cref{dual-problem1} in the sense that the optimal objective function values are identical, and an optimal solution of~\cref{dual-LQR-inequality-formulation} is identical to the corresponding optimal solution of~\cref{dual-problem1}. \cref{dual-LQR-inequality-formulation} is a convex optimization problem (SDP problem), whose solution can be easily found by existing convex optimization tools. The results are formally summarized in the following theorem.
\begin{theorem}\label{theorem2}
$d_{\rm opt}=\tilde d_{\rm opt}$ and an optimal
point of~\cref{dual-LQR-inequality-formulation} is $\{ P_k\}_{k=0}^{N-1}$ with
\begin{align}
&P_k=\begin{bmatrix}
   Q + A^T X_{k+1} A & A^T X_{k+1} B\\
   B^T X_{k+1} A & R+B^T X_{k+1} B\\
\end{bmatrix}\label{dual-optimal-point}
\end{align}
for $k \in \{0,\ldots,N-1\}$, where $X_{k+1},k \in \{0,\ldots,N-1\}$ are the solution given in~\cref{theorem1b}.
\end{theorem}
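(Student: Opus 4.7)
The plan is to derive \cref{dual-LQR-inequality-formulation} directly from \cref{dual-problem1} by carrying out the inner infimum in the Lagrangian in closed form, and then to invoke \cref{theorem1b} to identify the explicit Riccati-based optimal point.

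First, I would exploit the rearranged Lagrangian in~\eqref{eq1}. The matrices $S_0,\ldots,S_{N-1}$ are unconstrained symmetric variables that appear only linearly, so for the inner infimum $\inf_{\{S_k,F_k\}} L$ to exceed $-\infty$, the coefficient of each $S_k$ must vanish. This stationarity condition gives the equalities $\Gamma(F_k,P_k) = P_{k-1} + \bar P_{k-1}$ for $k \in \{1,\ldots,N-1\}$, and the terminal equation $\begin{bmatrix}A^T\\B^T\end{bmatrix}Q_f\begin{bmatrix}A^T\\B^T\end{bmatrix}^T + \begin{bmatrix}Q&0\\0&R\end{bmatrix} = P_{N-1} + \bar P_{N-1}$, which is exactly $\Gamma(0,P_N) = P_{N-1}+\bar P_{N-1}$ with $P_N = \begin{bmatrix}Q_f&0\\0&0\end{bmatrix}$. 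Because $\bar P_k \succeq 0$ is a free slack variable, it can be eliminated, converting these equalities into the inequalities $\Gamma(F_k,P_k) \succeq P_{k-1}$ and $\Gamma(0,P_N) \succeq P_{N-1}$ that appear in \cref{dual-LQR-inequality-formulation}.

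Second, after the $S_k$-coefficients vanish, the remaining portion of $L$ reduces to $J_d(\{P_k,F_k\})$, once the constant $Tr(Q_f W)$ from $J_p$ is absorbed into the $k{=}N$ term via the identity $Tr\bigl(\begin{bmatrix}I\\0\end{bmatrix}W\begin{bmatrix}I\\0\end{bmatrix}^T P_N\bigr) = Tr(Q_f W)$. The outer problem therefore becomes $\sup_{P_k\succeq 0}\inf_{F_k} J_d$. For each $k$, the map $F_k \mapsto Tr\bigl(\begin{bmatrix}I\\F_k\end{bmatrix}W\begin{bmatrix}I\\F_k\end{bmatrix}^T P_k\bigr)$ is a convex quadratic in $F_k$ whose Hessian is proportional to $P_{k,22}\otimes W$ (and similarly $P_{0,22}\otimes W_f$ at $k=0$). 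Hence the infimum is $-\infty$ unless $P_{k,22}\succ 0$, which justifies the constraint $\begin{bmatrix}0\\I\end{bmatrix}^T P_k \begin{bmatrix}0\\I\end{bmatrix} \succ 0$ in \cref{dual-LQR-inequality-formulation}. When $P_{k,22}\succ 0$, the first-order condition yields the closed-form minimizer $F_k = -P_{k,22}^{-1}P_{k,12}^T$, which is precisely the equality constraint in \cref{dual-LQR-inequality-formulation}. Substituting this $F_k$ back produces exactly the objective $J_d$ of \cref{dual-LQR-inequality-formulation}, and therefore $d_{\rm opt} = \tilde d_{\rm opt}$.

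Finally, I would identify the explicit optimal point. By \cref{theorem1b}, the Riccati-induced quadruple with $\bar P_k = 0$ and $P_k$ as in~\eqref{dual-optimal-point} is a dual optimal point of \cref{dual-problem1}. Assumption~\ref{assumption:1} forces $P_{k,22} = R + B^T X_{k+1}B \succ 0$, so the strict-positivity constraint of \cref{dual-LQR-inequality-formulation} is satisfied, and a direct computation gives $-P_{k,22}^{-1}P_{k,12}^T = -(R+B^TX_{k+1}B)^{-1}B^TX_{k+1}A$, which matches the $F_k$ of \cref{theorem1b}. Hence the stated $\{P_k\}$ is feasible for \cref{dual-LQR-inequality-formulation} and attains the shared optimal value $d_{\rm opt} = \tilde d_{\rm opt} = p_{\rm opt}$.

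The main obstacle is the bookkeeping in step one: correctly identifying the coefficient of each $S_{k-1}$ in~\eqref{eq1} and verifying that the terminal $Q_f$-block combines with $\mathrm{diag}(Q,R)$ to form $\Gamma(0,P_N)$ for $P_N = \mathrm{diag}(Q_f,0)$. Once this elimination is done carefully, the remainder of the argument is an unconstrained convex quadratic minimization in $F_k$, and the optimality of the Riccati point follows by direct substitution using \cref{theorem1b}.
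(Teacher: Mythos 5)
Your proposal follows the same essential path as the paper's proof: extract the constraints $\Gamma(F_k,P_k)\succeq P_{k-1}$ and $\Gamma(0,P_N)\succeq P_{N-1}$ from the linearity of the Lagrangian in the $S_k$ together with elimination of the slack variables $\bar P_k$; pin $F_k=-P_{k,22}^{-1}P_{k,12}^T$ by a closed-form quadratic minimization; and read off the explicit optimal point from \cref{theorem1b}. However, you perform the two eliminations in the opposite order from the paper, and this creates two concrete problems. First, once you force the coefficient of $S_{k-1}$ to vanish, the resulting condition $\Gamma(F_k,P_k)=P_{k-1}+\bar P_{k-1}$ already involves $F_k$, so the subsequent minimization of $J_d$ over $F_k$ is \emph{not} the unconstrained problem you treat it as. The paper avoids this coupling by minimizing over $F_k$ first: the entire $F_k$-dependence of the Lagrangian is a quadratic form in $F_k$ weighted by $\begin{bmatrix}A & B\end{bmatrix}S_{k-1}\begin{bmatrix}A & B\end{bmatrix}^T+W$, whose minimizer $\bar F_k=-P_{k,22}^{-1}P_{k,12}^T$ does not depend on $S_{k-1}$ (for $S_{k-1}\succeq 0$), so it can be substituted \emph{before} the $S$-elimination. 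Second, your justification of the constraint $P_{k,22}\succ 0$ is incorrect: for $P_k\succeq 0$ with singular $P_{k,22}$, the convex quadratic $F_k\mapsto Tr\bigl(\begin{bmatrix}I\\F_k\end{bmatrix}W\begin{bmatrix}I\\F_k\end{bmatrix}^T P_k\bigr)$ is still bounded below, because positive semidefiniteness of $P_k$ forces the range of $P_{k,12}^T$ to lie in the range of $P_{k,22}$, so the stationarity equation $P_{k,22}F_k=-P_{k,12}^T$ remains solvable; the infimum is therefore finite, not $-\infty$. The paper instead \emph{adds} the restriction $P_{k,22}\succ 0$ to the dual feasible set and argues that this loses nothing because the dual optimal point produced by the KKT analysis of \cref{theorem1b} already satisfies it --- an argument you could adopt verbatim, since you verify $P_{k,22}=R+B^TX_{k+1}B\succ 0$ in your final step anyway.

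With those two repairs your derivation coincides with the paper's; the terminal bookkeeping with $P_N=\mathrm{diag}(Q_f,0)$, the identification of $J_d$ as the reduced objective, and the verification that the Riccati-based $\{P_k\}_{k=0}^{N-1}$ is feasible and attains $d_{\rm opt}=\tilde d_{\rm opt}=p_{\rm opt}$ are all handled as in the paper.
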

\begin{proof}
We first define the set
\begin{align*}
&{\cal F}: = \left\{ {P \in {\mathbb S}_+^{n+m}
:\,\,\begin{bmatrix}
   0  \\
   {I_m }  \\
\end{bmatrix}^T P \begin{bmatrix}
   0  \\
   {I_m }  \\
\end{bmatrix} \succ 0 }\right\}.
\end{align*}

Form the solution of the KKT
condition in~\cref{theorem1b}, there exists a unique
dual optimal point~\eqref{dual-optimal-point}, which satisfies
$\begin{bmatrix}
   0  \\
   {I_m }  \\
\end{bmatrix}^T P_k \begin{bmatrix}
   0  \\
   {I_m }  \\
\end{bmatrix} \succ 0$. This ensures that the optimal objective function value of the dual
problem in~\cref{dual-problem1} is not changed when the constraints
$\begin{bmatrix}
 0  \\
I_m \\
\end{bmatrix}^T P_k \begin{bmatrix}
   0  \\
 I_m \\
\end{bmatrix} \succ 0,k \in \{ 0,1, \ldots ,N - 1\} $ is added. we can consider~\cref{dual-problem1} with its solution restricted to ${\cal F}$ as follows:
\begin{align}
&\sup_{\scriptstyle P_k  \in {\cal
F},{\bar P}_k \succeq 0 \hfill \atop \scriptstyle k \in \{
0, \ldots ,N - 1\}  \hfill} \mathop {\inf }\limits_{\{ S_k
,F_k \} _{k = 0}^{N - 1}} L(\{ (S_k ,F_k ,P_k ,{\bar P}_k )\} _{k=0}^{N - 1})\label{eq7}
\end{align}

Now, let us focus on the term in the Lagrangian~\eqref{eq1}, i.e.,
$\sum_{k = 1}^{N - 1} {Tr(\Gamma (F_k ,P_k
)S_{k-1} )}$, which can be represented by
\begin{align*}
\sum_{k = 1}^{N - 1} {Tr(\Gamma (F_k ,P_k
) S_{k - 1} )} = \sum_{k = 1}^{N - 1} {{\mathbb E}\left({\begin{bmatrix}
   z  \\
   F_k z\\
\end{bmatrix}^T P_k \begin{bmatrix}
   z\\
   F_k z\\
\end{bmatrix} } \right) + \sum_{k = 1}^{N - 1} {Tr\left( {\begin{bmatrix}
   Q & 0 \\
   0 & R \\
\end{bmatrix} S_{k - 1}}\right)}}
\end{align*}
where $z: = Ax(k - 1) + Bu(k - 1)$. If $\begin{bmatrix}
   0  \\
   {I_m }  \\
\end{bmatrix}^T P_k \begin{bmatrix}
   0  \\
   {I_m }  \\
\end{bmatrix} \succ 0$, then it is minimized with respect to $F_k$ when $F_k =  - P_{22,\,k}^{ - 1}
P_{12,\,k}^T$.

Therefore, \eqref{eq7} is equivalent to
\begin{align*}
&\sup_{\scriptstyle P_k \in {\cal
F},\,{\bar P}_k \succeq 0 \hfill \atop \scriptstyle k \in \{
0, \ldots ,\,N - 1\}  \hfill} \inf_{\{ S_k
\} _{k = 0}^{N - 1} } L(\{ (S_k ,\bar F_k ,P_k ,{\bar P}_k )\} _{k = 0}^{N - 1} )
\end{align*}
where $\bar F_k : =  - P_{22,k}^{ - 1} P_{12,k}^T$. Since
$\inf_{\{ S_k\}_{k=0}^{N-1}}L (\{ (S_k ,\bar F_k ,P_k ,\,{\bar P}_k )\}
_{k = 0}^{N - 1} )$ has a finite value only when $\Gamma (F_k^*
,P_k ) - {\bar P}_{k - 1} = P_{k - 1} ,k
\in \{ 1,2, \ldots N - 1\}$ and $\Gamma (0,P_N ) -
{\bar P}_{N - 1} = P_{N - 1}$, the problem~\eqref{eq7}
can be formulated as
\begin{align*}
&{\rm max}_{\scriptstyle P_0, \ldots
,P_{N - 1} \in {\mathbb S}_+^n \hfill \atop \scriptstyle
{\bar P}_0, \ldots ,{\bar P}_{N-1} \in {\mathbb S}_+^n
\hfill}\,J_d (\{ P_k,\bar F_k \} _{k = 0}^{N - 1})\\
&{\rm s.t.}\\
&\Gamma (\bar F_k,P_k ) - {\bar P}_{k - 1}  =
P_{k - 1} ,\quad k \in \{ 1,2, \ldots N - 1\}\\
&\Gamma (0,P_N ) - {\bar P}_{N - 1}=P_{N -1}
\end{align*}
or equivalently,
\begin{align*}
&{\rm max}_{P_0 , \ldots ,P_{N-1}
\in {\mathbb S}_+^n } \,\,J_d (\{ P_k,\bar F_k \}_{k=
0}^{N - 1})\\
&{\rm s.t.}\\
&\Gamma (\bar F_k,P_k ) \succeq P_{k-1} ,\quad k
\in \{ 1,2, \ldots N - 1\}\\
&\Gamma (0,P_N ) \succeq P_{N - 1}
\end{align*}

This completes the proof.

\end{proof}

Note that the approaches given in this paper can be easily extended to linear time-varying systems. In the next section, we study the decentralized LQG problem by combining the developments in this section and the results in~\cite{de2002extended}.

\section{Decentralized LQG performance analysis and design}

The structure of the optimization in~\cref{theorem1} allows us to derive a sufficient but simple convex relaxation for designing a decentralized LQG controller. Consider the stochastic LTI system composed of $M$ interconnected subsystems
\begin{align}
&x_i(k+1) = \sum_{j=1}^M {A_{ij} x_j (k)}+B_i u_i(k)+w_i(k)\label{interconnected-system}
\end{align}
for $i\in \{1,\ldots,M\}$, where $k \in {\mathbb N}$ is the time,
$x_i (k) \in {\mathbb R}^{n_i}$ is the state vector, $u_i(k)\in
{\mathbb R}^{m_i}$ is the control vector, $x_i (0)\sim {\cal
N}(0,W_f)$ and $w_i(k) \sim {\cal N}(0,W)$ are mutually
independent Gaussian random vectors. Let us define
\begin{align}
&x(k)=\begin{bmatrix}
   x_1(k)\\ \vdots\\
   x_M (k)\\
\end{bmatrix}, u(k) = \begin{bmatrix}
   {u_1 (k)}  \\
    \vdots   \\
   {u_M (k)}  \\
\end{bmatrix}, w(k) = \begin{bmatrix}
   {w_1 (k)}  \\
    \vdots   \\
   {w_M (k)}  \\
\end{bmatrix}.\label{eq12}
\end{align}

Then, the system dynamics~\eqref{interconnected-system} can be written as
\begin{align*}
&x(k + 1) = Ax(k) + Bu(k) + w(k)
\end{align*}
where
\begin{align*}
A = \begin{bmatrix}
   {A_{11} } &  \cdots  & {A_{1M} }  \\
    \vdots  &  \ddots  &  \vdots   \\
   {A_{M1} } &  \cdots  & {A_{MM} }  \\
\end{bmatrix} \in {\mathbb R}^{n \times n}, \quad B = {\rm diag}(B_1 , \ldots ,\,B_M ) \in {\mathbb R}^{n \times m},
\end{align*}
$n = n_1  +  \cdots  + n_M $, and $m = m_1  +  \cdots  + m_M$. Now, we formally state the decentralized state-feedback LQG problem in the sequel.
\begin{problem}[Decentralized LQG problem]\label{decentralized-LQR-problem2}
Solve
\begin{align*}
J_{\cal K}^* :=& \min_{F_k
\in {\mathbb R}^{m \times n},k \in \{
0,\,1, \ldots ,\,N - 1\}} \,{\mathbb E}(x(k)^T Q_f x(k))+ \sum_{k=0}^{N-1} {{\mathbb E}\left( {\begin{bmatrix}
   {x(k)}  \\
   {u(k)}  \\
\end{bmatrix}^T \begin{bmatrix}
   Q & 0  \\
   0 & R  \\
\end{bmatrix} \begin{bmatrix}
   x(k) \\
   u(k) \\
\end{bmatrix}} \right)}\\
{\rm subject \,\, to}&\\
x(k + 1) =& Ax(k) + Bu(k) + w(k)\\
u(k)=& F_k x(k),\quad F_k  \in {\cal K}
\end{align*}
where ${\cal K}$ is a linear subspace defined as ${\cal K}: = \{ K
\in {\mathbb R}^{m \times n} :\,K = {\rm diag}(F_1 ,\,F_2 , \ldots
,\,F_M ),\,F_i \in {\mathbb R}^{m_i \times n_i } ,\,i \in \{ 1,
\ldots ,\,M\} \}$.
\end{problem}

Equivalently, the problem can be converted into~\cref{primal-LQR} and~\cref{primal-LQR-inequality-formulation} with the
additional constraint $F_k  \in {\cal K}, k \in \{ 0,1,\ldots
,N - 1\}$. The problem is a non-convex structured state-feedback design problem. When $F_k  \in {\cal K}, k \in \{
0,1, \ldots ,N - 1\}$ is given, then its exact cost can be
evaluated using a convex optimization as follows.
\begin{proposition}
Let $F_k  \in {\cal K},k \in \{ 0,1, \ldots ,N - 1\}$ be
given. The cost corresponding to the given structured static
state-feedback gain is $J^* (F_0,\ldots ,F_{N - 1} ): = J_p (\{
S_k \} _{k = 0}^{N - 1})$ where $S_k  = \Phi (F_k
,S_{k - 1} ),k \in \{ 1, \ldots ,N-1\} $ with
$S_0  = \begin{bmatrix}
   I_n\\
   F_0\\
\end{bmatrix} W_f \begin{bmatrix}
   I_n \\
   F_0 \\
\end{bmatrix}^T$.
\end{proposition}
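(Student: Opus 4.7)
The plan is a direct forward computation: the claim merely asserts that the closed-loop cost under a fixed gain sequence equals $J_p(\{S_k\}_{k=0}^{N-1})$ with $S_k$ generated by the deterministic covariance recursion $\Phi$, so the statement is insensitive to the decentralization constraint $F_k \in {\cal K}$ and reduces to propagating the joint state/input covariance through the closed-loop dynamics. I would structure the argument in three short steps: (i) identify the augmented covariance $S_k$ in factored form, (ii) derive the one-step recursion $S_k = \Phi(F_k,S_{k-1})$ from the system equation, and (iii) re-express the LQG cost in trace form and match it term-by-term with $J_p$.

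For step (i), fix $u(k)=F_k x(k)$ and form the augmented vector $\begin{bmatrix} x(k) \\ u(k) \end{bmatrix} = \begin{bmatrix} I_n \\ F_k \end{bmatrix} x(k)$. Since $x(0)\sim{\cal N}(0,W_f)$ and $w(k)$ is zero-mean, $S_k = \begin{bmatrix} I_n \\ F_k \end{bmatrix} {\mathbb E}[x(k)x(k)^T]\begin{bmatrix} I_n \\ F_k \end{bmatrix}^T$; in particular, at $k=0$ this gives the stated initial condition $S_0 = \begin{bmatrix} I_n \\ F_0 \end{bmatrix} W_f \begin{bmatrix} I_n \\ F_0 \end{bmatrix}^T$. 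For step (ii), I would propagate the state covariance one step using $x(k)=\begin{bmatrix} A & B \end{bmatrix} \begin{bmatrix} x(k-1) \\ u(k-1) \end{bmatrix} + w(k-1)$ and the independence of $w(k-1)$ from $(x(k-1),u(k-1))$, yielding ${\mathbb E}[x(k)x(k)^T] = \begin{bmatrix} A & B \end{bmatrix} S_{k-1}\begin{bmatrix} A & B \end{bmatrix}^T + W = \begin{bmatrix} A^T \\ B^T \end{bmatrix}^T S_{k-1}\begin{bmatrix} A^T \\ B^T \end{bmatrix} + W$. Sandwiching this between $\begin{bmatrix} I_n \\ F_k \end{bmatrix}$ and its transpose reproduces exactly the definition of $\Phi(F_k,S_{k-1})$, which therefore equals $S_k$.

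For step (iii), I would rewrite each running-cost expectation as ${\mathbb E}\!\left( \begin{bmatrix} x(k) \\ u(k) \end{bmatrix}^T \begin{bmatrix} Q & 0 \\ 0 & R \end{bmatrix} \begin{bmatrix} x(k) \\ u(k) \end{bmatrix} \right) = Tr\!\left(\begin{bmatrix} Q & 0 \\ 0 & R \end{bmatrix} S_k\right)$, and similarly handle the terminal cost via ${\mathbb E}[x(N)^T Q_f x(N)] = Tr\!\left(Q_f\left(\begin{bmatrix} A^T \\ B^T \end{bmatrix}^T S_{N-1}\begin{bmatrix} A^T \\ B^T \end{bmatrix}+W\right)\right)$ using one more step of the plant propagation derived in (ii). Summing these trace identities and comparing with the definition of $J_p$ in \cref{primal-LQR} yields $J^*(F_0,\ldots,F_{N-1}) = J_p(\{S_k\}_{k=0}^{N-1})$.

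There is no genuine obstacle here; the argument is pure bookkeeping with block matrices and the law of total expectation, and the decentralized structure $F_k \in {\cal K}$ plays no role. The only mildly delicate point is aligning indices: the terminal term uses $S_{N-1}$ pushed through one further step of the dynamics rather than an $S_N$ (which is not introduced in the primal), and the running cost is summed over $k=0,\ldots,N-1$ with $S_0$ defined directly from $W_f$ instead of from the recursion $\Phi$.
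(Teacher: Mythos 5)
Your proof is correct and follows exactly the covariance-propagation argument that the paper uses implicitly: the paper states this proposition without proof, relying on the same reasoning it invokes when converting \cref{finite-horizon-LQR-problem} into the covariance-constrained \cref{primal-LQR}, which is precisely your steps (i)--(iii). Your observation that the constraint $F_k\in{\cal K}$ plays no role and that the terminal term uses $S_{N-1}$ pushed through one more step of the plant (rather than an undefined $S_N$) is accurate and consistent with the definition of $J_p$.
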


The cost can be also evaluated using~\cref{primal-LQR-inequality-formulation}, which is simply an
SDP if $F_k\in {\cal K},\,k \in \{ 0,\ldots,N-1\}$ are
constants. Next, motivated by the LMI-based decentralized control design
method in~\cite{de2002extended}, we suggest a simple convex
relaxation of~\cref{decentralized-LQR-problem2}.
\begin{problem}\label{decentralized-LQR-problem}
Solve
\begin{align*}
&(S_k^*,L_k^*,G_k^*)_{k=0}^{N - 1}:= \argmin_{ S_k \in {\mathbb S}^{n + m}
,L_k\in {\mathbb R}^{n \times m},G_k \in {\mathbb R}^{n
\times n}}\,f^p (\{ S_k \}_{k=0}^{N-1})\\
&{\rm subject\,\, to}\\
&\left[ {\begin{array}{*{20}c}
   {S_k } & *  \\
   {\left[ {\begin{array}{*{20}c}
   {G_k } & {L_k }  \\
\end{array}} \right]} & {G_k  + G_k^T  - \left[ {\begin{array}{*{20}c}
   {A^T }  \\
   {B^T }  \\
\end{array}} \right]^T S_{k - 1} \left[ {\begin{array}{*{20}c}
   {A^T }  \\
   {B^T }  \\
\end{array}} \right] - W}  \\
\end{array}} \right] \succeq  0,\quad \forall k \in \{ 1,2,\ldots,N-1\}\\
&\left[ {\begin{array}{*{20}c}
   {S_0 } & *  \\
   {\left[ {\begin{array}{*{20}c}
   {G_0 } & {L_0 }  \\
\end{array}} \right]} & {G_0  + G_0^T  - W_f }  \\
\end{array}} \right]\succeq 0\\
&G_k= {\rm diag}(G_{k,1}, \ldots,G_{k,M} ),\quad L_k= {\rm diag}(L_{k,1}, \ldots,L_{k,M})\\
& L_{k,i}\in {\mathbb R}^{n_i \times m_i },\quad G_{k,\,i} \in
{\mathbb R}^{n_i \times n_i}
\end{align*}
\end{problem}

\cref{decentralized-LQR-problem} is a convex optimization problem (SDP problem), whose solution can be easily found using existing tools. Once its solution is found, then a suboptimal state feedback gain can be recovered from the solution.
\begin{proposition}\label{proposition:1}
Let $(S_k^* ,L_k^*,G_k^* )_{k = 0}^{N - 1}$ be an
optimal point of~\cref{decentralized-LQR-problem}, and let $\tilde J_{\cal
K}^*$ be the corresponding optimal objective function value. Then, $J_{\cal K}^*
\le \tilde J_{\cal K}^* $ is satisfied under the decentralized
control policy $u_i(k) = (L_{k,i}^* )^T (G_{k,i}^*)^{-T}
x_i(k)$ for all $k \in \{ 0,1, \ldots ,N - 1\} $ and $i \in \{
1,2, \ldots ,M\} $.
\end{proposition}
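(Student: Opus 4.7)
The plan is to show that the decentralized gain $F_k^* := L_k^{*T}G_k^{*-T}$ recovered from an optimal point of~\cref{decentralized-LQR-problem} is (i) in $\mathcal K$ and (ii) feasible for the original covariance recursion with covariances majorized by $S_k^*$, so that the true cost under $F_k^*$ does not exceed $\tilde J_{\cal K}^*$. I first check that $G_k^*$ is invertible. The $(2,2)$ block of the LMI in~\cref{decentralized-LQR-problem} yields $G_k^* + G_k^{*T} \succeq M_k$ where $M_k := [A^T;B^T]^T S_{k-1}^* [A^T;B^T] + W \succ 0$ (using $W\succ 0$ from~\cref{assumption:1}), hence $G_k^*$ is nonsingular. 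Because $G_k^*$ and $L_k^*$ are block-diagonal with blocks conformal to the subsystem partition, $F_k^* = L_k^{*T}G_k^{*-T} = {\rm diag}(F_{k,1}^*,\ldots,F_{k,M}^*)$ is block-diagonal as well, so $F_k^* \in \mathcal K$.

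Next I translate the LMI of~\cref{decentralized-LQR-problem} into a covariance bound, which is the non-strict analog of~\cref{lemma:extended-Schur-complement-II}. Taking the Schur complement of the LMI with respect to its PD $(2,2)$ block gives
\[
S_k^* \succeq [G_k^*,L_k^*]^T (G_k^* + G_k^{*T} - M_k)^{-1} [G_k^*,L_k^*].
\]
From $(G_k^* - M_k) M_k^{-1} (G_k^{*T} - M_k) \succeq 0$ one obtains $G_k^* M_k^{-1} G_k^{*T} \succeq G_k^* + G_k^{*T} - M_k$, and inverting (both sides PD) gives $(G_k^* + G_k^{*T} - M_k)^{-1} \succeq G_k^{*-T} M_k G_k^{*-1}$. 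Since $G_k^{*-1}[G_k^*,L_k^*] = [I,F_k^{*T}]$, substituting yields
\[
S_k^* \succeq \begin{bmatrix}I\\ F_k^*\end{bmatrix} M_k \begin{bmatrix}I\\ F_k^*\end{bmatrix}^T = \Phi(F_k^*,S_{k-1}^*),
\]
and by the same argument for $k=0$, $S_0^* \succeq [I;F_0^*]W_f[I;F_0^*]^T$.

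I then propagate the \emph{actual} covariances $\bar S_k := \Phi(F_k^*,\bar S_{k-1})$ induced by applying $u(k)=F_k^* x(k)$, with $\bar S_0 = [I;F_0^*]W_f[I;F_0^*]^T$. A straightforward induction using the monotonicity of $\Phi(F,\cdot)$ in its second argument and the bounds from the previous step gives $\bar S_k \preceq S_k^*$ for all $k$. The true cost under $F_k^*$ is exactly $J_p(\{\bar S_k\}_{k=0}^{N-1})$. Because $Q_f,Q,R,W\succeq 0$, the objective $J_p$ is monotone nondecreasing in each $S_k$ with respect to the Loewner order, so
\[
J^*(F_0^*,\ldots,F_{N-1}^*) = J_p(\{\bar S_k\}) \le J_p(\{S_k^*\}) = \tilde J_{\cal K}^*.
\]
Since $F_k^* \in \mathcal K$, this is a feasible cost for~\cref{decentralized-LQR-problem2}, whence $J_{\cal K}^* \le \tilde J_{\cal K}^*$.

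The main obstacle is the middle step: establishing the matrix inequality $(G + G^T - M)^{-1} \succeq G^{-T} M G^{-1}$ and carrying it through the Schur complement manipulation. This is precisely the de Oliveira slack-variable trick that decouples the Lyapunov variable $S_{k-1}$ from the gain $F_k$ inside the LMI, and is the reason why block-diagonality can be imposed on the surrogate variables $G_k,L_k$ (rather than on $S_{k-1}$) to obtain a tractable convex upper bound for the structured design problem.
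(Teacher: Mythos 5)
Your proposal follows the same overall strategy as the paper's proof: establish invertibility of $G_k^*$ from the $(2,2)$ block of the LMI together with $W\succ 0$ (\cref{assumption:1}), recover the gain $F_k^*=L_k^{*T}G_k^{*-T}$, observe that block-diagonality of $L_k^*,G_k^*$ forces $F_k^*\in{\cal K}$, show that $(F_k^*,S_k^*)$ is feasible for the covariance-inequality constraints of \cref{primal-LQR-inequality-formulation}, and conclude the cost bound. The one place you genuinely diverge is in how the LMI of \cref{decentralized-LQR-problem} is converted back into $\Phi(F_k^*,S_{k-1}^*)\preceq S_k^*$. The paper does this in a single congruence step: with $E_k=\begin{bmatrix}I_n & G_k^{-1}L_k\end{bmatrix}$ and $T_k=\begin{bmatrix}-I_{n+m}\\ E_k\end{bmatrix}$, pre/post-multiplying the LMI by $T_k^T$ and $T_k$ gives $S_k-E_k^T M_k E_k=S_k-\Phi(F_k,S_{k-1})\succeq 0$ directly, with no invertibility needed beyond that of $G_k$. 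Your route --- Schur complement with respect to the $(2,2)$ block followed by the inversion chain $(G+G^T-M)^{-1}\succeq G^{-T}MG^{-1}$ --- proves the same inequality but assumes the $(2,2)$ block $G_k^*+G_k^{*T}-M_k$ is positive \emph{definite}, whereas the LMI only guarantees it is positive semidefinite; at an optimal (hence typically boundary) point this block may be singular, and then both your Schur-complement step and the inversion of $G+G^T-M$ are undefined as written. This is a repairable technical flaw (a limiting argument, or simply the congruence above), not a wrong idea. On the other side of the ledger, your explicit induction propagating the true closed-loop covariances $\bar S_k\preceq S_k^*$ and invoking monotonicity of $J_p$ is more careful than the paper, which compresses that final step into a citation of \cref{theorem1} when the argument actually needed is the one in \cref{optimal-property}.
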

\begin{proof}

Since $W\succ 0$, it is easy to see that if the SDP is feasible, then $G_k  +G_k^T\succ 0$, implying that $G_k$ is invertible.
Pre- and post-multiplying both sides of the inequalities in~\cref{decentralized-LQR-problem} by
\begin{align*}
&\begin{bmatrix}
   - I & 0  \\
   0 & -I  \\
   I & G_k^{-1} L_k \\
\end{bmatrix}^T ,\,k \in \{ 0,1, \ldots ,N - 1\}
\end{align*}
and its transpose yield
\begin{align*}
\Phi (F_k,S_{k-1}) \preceq & S_k\quad k \in \{1,2,\ldots,N-1\}\\
\begin{bmatrix}
   I \\
   F_0\\
\end{bmatrix}W_f \begin{bmatrix}
   I\\
   F_0\\
\end{bmatrix}^T \preceq & S_0
\end{align*}
with $F_k = L_k^T G_k^{-T},k \in \{0,1, \ldots ,N - 1\}$. By using~\cref{theorem1}, one concludes that $J_{\cal K}^* \le
\tilde J_{\cal K}^*$ is satisfied under the policy $u(k) = F_k^*
x(k),k \in \{ 0,\ldots,N-1\}$. Since $F_k^*$ has a
block diagonal structure according to the state and input
partitions in~\eqref{eq12}, the desired result can be obtained.

\end{proof}

It can be readily proved that $J^*_p \le J_{\cal K}^* \le J^*
(F_0^*,\ldots,F_{N - 1}^*) \le \tilde J_{\cal K}^*$ holds, where
\begin{enumerate}
\item $J^*_p$ is the optimal cost corresponding to the centralized full state-feedback in~\cref{primal-LQR}
\item $J_{\cal K}^*$ is the true optimal cost obtained by solving~\cref{decentralized-LQR-problem2}
\item $J^*(F_0^*,\ldots ,F_{N-1}^* )$ is the exact cost evaluated
using $F_0^*,\ldots ,F_{N-1}^*$ obtained from~\cref{decentralized-LQR-problem}
\item $\tilde J_{\cal K}^*$ is the optimal objective value of~\cref{decentralized-LQR-problem}
\end{enumerate}

Note that $\tilde J_{\cal K}^* \geq J^*(F_0^*,\ldots ,F_{N-1}^* )$ due to the inherent conservatism of the SDP in~\cref{decentralized-LQR-problem}. A simple example is given in the sequel.
\begin{example}\label{ex1}
Consider the interconnected system
\begin{align*}
&x_1 (k + 1) = A_{11} x_1(k) + A_{12} x_2 (k) + B_1 u_1 (k) +
w_1(k)\\
&x_2 (k + 1) = A_{21} x_1(k) + A_{22} x_2 (k) + B_2 u_2 (k) +
w_2(k)
\end{align*}
where
\begin{align*}
A_{11}  =& \begin{bmatrix}
   0.8220 & -0.0898  \\
   -0.2389 & 0.9358  \\
\end{bmatrix},\quad A_{12}  = \begin{bmatrix}
   0.4860 & -0.1820 \\
   0.1680 & -0.3143 \\
\end{bmatrix}\\
A_{21}  =& \begin{bmatrix}
   0.1891 & -0.3195 \\
   0.2067 & -0.6610 \\
\end{bmatrix},\quad A_{22}  = \begin{bmatrix}
   - 0.6404 & 1.4540 \\
   0.2067 & - 0.6610 \\
\end{bmatrix}\\
B_1  =& \begin{bmatrix}
   { - 0.3505}  \\
   { - 1.9788}  \\
\end{bmatrix}\quad B_2  = \begin{bmatrix}
   { - 0.4901}  \\
   { - 0.0515}  \\
\end{bmatrix}
\end{align*}

Solving~\cref{decentralized-LQR-problem} with $Q = Q_f  =
I_n ,R = I_n ,W = 0.01I_n ,W_f  = I_n$, and $N=30$ yields
$\tilde J_{\cal K}^*  = 19.6799$ and $J^* (F_0^*,\ldots ,F_{N
- 1}^* ) = 18.0598$. On the other hand, the optimal cost
corresponding to the centralized LQG (full state-feedback) is $J_p^*  = 16.2610$.
Therefore, one concludes $J_p^*= 16.2610 \le J_{\cal K}^* \le
J^* (F_0^*,\ldots ,F_{N - 1}^* ) = 18.0598$. The time
histories of the state under the obtained decentralized control
policy is shown in~\cref{fig1} and the histogram of the cost
of 3000 simulations is plotted in~\cref{fig2}.
\begin{figure}[h!]
\centering\epsfig{figure=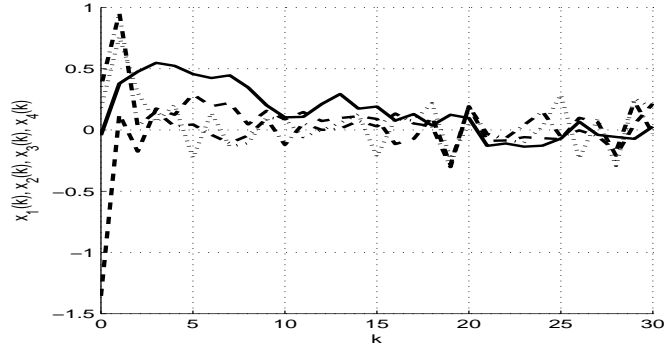,
height=5cm,width=12cm}\caption{Time histories of the
state under the obtained decentralized control policy.
}\label{fig1}
\end{figure}
\begin{figure}[h!]
\centering\epsfig{figure=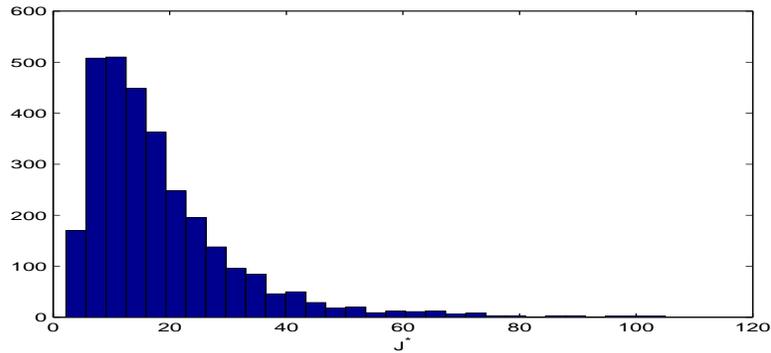,
height=5cm,width=12cm}\caption{Cost histogram of 3000
simulations}\label{fig2}
\end{figure}
\end{example}

\section{Conclusion}

In this paper, we have presented a new SDP formulation of the finite-horizon LQG problem and its dual. The proposed primal problem efficiently decouples the system matrices and the gain matrix. This fact allows us to develop new convex relaxations of non-convex structured control design problems such as the decentralized control problem. Besides, we are expected to gain new insights on the LQG problem through this study. Numerical examples have demonstrated the effectiveness of the proposed SDP formulations.

\section{Acknowledgement}
D. Lee is thankful to J. Hu and D. Kim for their fruitful comments on this paper.

\bibliographystyle{IEEEtran}
\bibliography{reference}

\begin{thebibliography}{10}
\providecommand{\url}[1]{#1}
\csname url@samestyle\endcsname
\providecommand{\newblock}{\relax}
\providecommand{\bibinfo}[2]{#2}
\providecommand{\BIBentrySTDinterwordspacing}{\spaceskip=0pt\relax}
\providecommand{\BIBentryALTinterwordstretchfactor}{4}
\providecommand{\BIBentryALTinterwordspacing}{\spaceskip=\fontdimen2\font plus
\BIBentryALTinterwordstretchfactor\fontdimen3\font minus
  \fontdimen4\font\relax}
\providecommand{\BIBforeignlanguage}[2]{{%
\expandafter\ifx\csname l@#1\endcsname\relax
\typeout{** WARNING: IEEEtran.bst: No hyphenation pattern has been}%
\typeout{** loaded for the language `#1'. Using the pattern for}%
\typeout{** the default language instead.}%
\else
\language=\csname l@#1\endcsname
\fi
#2}}
\providecommand{\BIBdecl}{\relax}
\BIBdecl

\bibitem{Boyd2004}
S.~Boyd and L.~Vandenberghe, \emph{Convex Optimization}.\hskip 1em plus 0.5em
  minus 0.4em\relax Cambridge University Press, 2004.

\bibitem{Boyd1994}
S.~Boyd, L.~El~Ghaoui, E.~Feron, and V.~Balakrishnan, \emph{Linear Matrix
  Inequalities in Systems and Control Theory}.\hskip 1em plus 0.5em minus
  0.4em\relax Philadelphia, PA: SIAM, 1994.

\bibitem{de1999new}
M.~C. de~Oliveira, J.~Bernussou, and J.~C. Geromel, ``A new discrete-time
  robust stability condition,'' \emph{Systems and Control Letters}, vol.~37,
  no.~4, pp. 261--265, 1999.

\bibitem{el2000advances}
L.~El~Ghaoui and S.-I. Niculescu, \emph{Advances in linear matrix inequality
  methods in control}.\hskip 1em plus 0.5em minus 0.4em\relax Siam, 2000,
  vol.~2.

\bibitem{geromel2007h}
J.~C. Geromel, R.~H. Korogui, and J.~Bernussou, ``{$H_2$} and {$H_\infty$}
  robust output feedback control for continuous time polytopic systems,''
  \emph{Control Theory \& Applications, IET}, vol.~1, no.~5, pp. 1541--1549,
  2007.

\bibitem{de2002extended}
M.~C. De~Oliveira, J.~C. Geromel, and J.~Bernussou, ``Extended {$H_2$} and
  {$H_\infty$} norm characterizations and controller parametrizations for
  discrete-time systems,'' \emph{International Journal of Control}, vol.~75,
  no.~9, pp. 666--679, 2002.

\bibitem{li2002robust}
L.~Li, Z.-Q. Luo, T.~N. Davidson, K.~M. Wong, and E.~Boss{\'e}, ``Robust
  filtering via semidefinite programming with applications to target
  tracking,'' \emph{SIAM Journal on Optimization}, vol.~12, no.~3, pp.
  740--755, 2002.

\bibitem{xu2008survey}
S.~Xu and J.~Lam, ``A survey of linear matrix inequality techniques in
  stability analysis of delay systems,'' \emph{International Journal of Systems
  Science}, vol.~39, no.~12, pp. 1095--1113, 2008.

\bibitem{scherer2000linear}
C.~Scherer and S.~Weiland, ``Linear matrix inequalities in control,''
  \emph{Lecture Notes, Dutch Institute for Systems and Control, Delft, The
  Netherlands}, vol.~3, no.~2, 2000.

\bibitem{palhares2000mixed}
R.~M. Palhares and P.~L. Peres, ``Mixed filtering for uncertain linear systems:
  A linear matrix inequality approach,'' \emph{International Journal of Systems
  Science}, vol.~31, no.~9, pp. 1091--1098, 2000.

\bibitem{han2005new}
Q.-L. Han, ``A new delay-dependent stability criterion for linear neutral
  systems with norm-bounded uncertainties in all system matrices,''
  \emph{International Journal of Systems Science}, vol.~36, no.~8, pp.
  469--475, 2005.

\bibitem{lee2010less}
D.~H. Lee, J.~B. Park, and Y.~H. Joo, ``A less conservative {LMI} condition for
  robust $\cal d$-stability of polynomial matrix polytopes—{A} projection
  approach,'' \emph{IEEE Transactions on Automatic Control}, vol.~56, no.~4,
  pp. 868--873, 2010.

\bibitem{yao2001stochastic}
D.~D. Yao, S.~Zhang, and X.~Y. Zhou, ``Stochastic linear-quadratic control via
  semidefinite programming,'' \emph{SIAM Journal on Control and Optimization},
  vol.~40, no.~3, pp. 801--823, 2001.

\bibitem{rami2000linear}
M.~A. Rami and X.~Y. Zhou, ``Linear matrix inequalities, {R}iccati equations,
  and indefinite stochastic linear quadratic controls,'' \emph{Automatic
  Control, IEEE Transactions on}, vol.~45, no.~6, pp. 1131--1143, 2000.

\bibitem{henrion2001rank}
D.~Henrion, G.~Meinsma \emph{et~al.}, ``Rank-one {LMI}s and {L}yapunov's
  inequality,'' \emph{IEEE Transactions on Automatic Control}, vol.~46, no.~8,
  pp. 1285--1288, 2001.

\bibitem{vandenberghe1996semidefinite}
L.~Vandenberghe and S.~Boyd, ``Semidefinite programming,'' \emph{SIAM review},
  vol.~38, no.~1, pp. 49--95, 1996.

\bibitem{balakrishnan2003semidefinite}
V.~Balakrishnan and L.~Vandenberghe, ``Semidefinite programming duality and
  linear time-invariant systems,'' \emph{Automatic Control, IEEE Transactions
  on}, vol.~48, no.~1, pp. 30--41, 2003.

\bibitem{lee2018primal}
D.~Lee and J.~Hu, ``Primal-dual {Q}-learning framework for {LQR} design,''
  \emph{IEEE Transactions on Automatic Control}, vol.~64, no.~9, pp.
  3756--3763, 2018.

\bibitem{gattami2010generalized}
A.~Gattami, ``Generalized linear quadratic control,'' \emph{IEEE Transactions
  on Automatic Control}, vol.~55, no.~1, pp. 131--136, 2010.

\bibitem{you2013lagrangian}
S.~You and J.~C. Doyle, ``A {L}agrangian dual approach to the {G}eneralized
  {KYP} lemma,'' in \emph{CDC}, 2013, pp. 2447--2452.

\bibitem{you2015primal}
S.~You, A.~Gattami, and J.~C. Doyle, ``Primal robustness and semidefinite
  cones,'' \emph{arXiv preprint arXiv:1503.07561}, 2015.

\bibitem{lee2021lossless}
D.~Lee, ``Lossless convexification and duality,'' \emph{arXiv preprint
  arXiv:2108.01457}, 2021.

\bibitem{bertsekas1996neuro}
D.~P. Bertsekas and J.~N. Tsitsiklis, \emph{Neuro-dynamic programming}.\hskip
  1em plus 0.5em minus 0.4em\relax Athena Scientific Belmont, MA, 1996.

\end{thebibliography}

\

\end{document}